\newtheorem{thm}{Theorem}[section]
\newtheorem{lem}[thm]{Lemma}
\newtheorem{cor}[thm]{Corollary}
\newtheorem{prop}[thm]{Proposition}
\newtheorem{thmx}{Theorem}
\theoremstyle{definition}
\newtheorem*{defi}{Definition}
\newtheorem*{rmk}{Remark}
\newtheorem*{ques}{Question}
\theoremstyle{remark}
\newcommand{\EC}{\widehat{\mathbb{C}}}
\newcommand{\C}{\mathbb{C}}
\newcommand{\D}{\mathbb{D}}
\newcommand{\A}{\mathbb{A}}
\newcommand{\BH}{\mathbb{H}}
\newcommand{\R}{\mathbb{R}}
\newcommand{\T}{\mathbb{T}}
\newcommand{\Z}{\mathbb{Z}}
\newcommand{\MA}{\mathcal{A}}
\newcommand{\ii}{\textup{i}}
\newcommand{\re}{\textup{Re\,}}
\newcommand{\im}{\textup{Im\,}}
\newcommand{\Int}{\textup{Int}}
\newcommand{\Ext}{\textup{Ext}}
\newcommand{\Mod}{\textup{mod}}
\makeatletter\@addtoreset{equation}{section}\makeatother
\begin{document}

\author{FEI YANG}
\address{Department of Mathematics, Nanjing University, Nanjing 210093, P. R. China}
\email{yangfei@nju.edu.cn}

\title[Meromorphic functions with periodic Herman rings]{On the formulas of meromorphic functions with periodic Herman rings}

\begin{abstract}
We construct some explicit formulas of rational maps and transcendental meromorphic functions having Herman rings of period strictly larger than one. 
This gives an answer to a question raised by Shishikura in the 1980s. Moreover, the formulas of some rational maps with nested Herman rings are also found.

To obtain the formulas of transcendental meromorphic functions having periodic Herman rings, a crucial step is to find an explicit family of transcendental entire functions having bounded Siegel disks of all possible periods and rotation numbers. This is based on proving the existence of a Mandelbrot-like set of period one in the corresponding parameter space.
\end{abstract}

\subjclass[2020]{Primary 37F10; Secondary 37F50, 37F46}

\keywords{Herman rings; Siegel disks; Mandelbrot-like set; quasiconformal surgery}

\date{\today}



\maketitle


\section{Introduction}\label{introduction}

Let $f$ be a rational map with degree at least two or a transcendental meromorphic function. In this paper by \textit{transcendental meromorphic} we mean the maps defined in the complex plane $\C$ with exactly one essential singularity at infinity.
A periodic Fatou component $U$ of $f$ is called a \textit{Herman ring} if $U$ is conformally isomorphic to an annulus and if the restriction of $f$, or some iterate of $f$ on $U$, is conformally conjugate to an irrational rotation of this annulus.
Unlike (super-)attracting basins, parabolic basins and Siegel disks, Herman rings have no associated periodic points. Hence it is usually difficult to determine whether a given map has a Herman ring.

There are two known methods for constructing Herman rings. Based on the work of Arnold, the original method is based on a careful analysis of the analytic diffeomorphism on the unit circle, due to Herman \cite{Her79}. He proved that every orientation-preserving analytic circle diffeomorphism of rotation number in a certain class of irrationals is analytically conjugate to the rigid rotation. Later Yoccoz generalized this class to the Herman type irrationals and gave an arithmetic characterization \cite{Yoc02}. In particular, according to Herman-Yoccoz, for any $a\in\C$ with $|a|>3$ and Herman type $\theta\in\R/\Z$, there exists $t=t(\theta)\in\R/\Z$ such that
\begin{equation}\label{equ:Blaschke}
B(z)= e^{2\pi\ii t}z^2\frac{z-a}{1-\overline{a}z}
\end{equation}
has a Herman ring containing the invariant unit circle of rotation number $\theta$.
An alternative method is based on quasiconformal surgery, by pasting together two Siegel disks, due to Shishikura \cite{Shi87}.
He also proved that if a rational map has a Herman ring, then the degree of this map is at least $3$. Hence the cubic Blaschke products in \eqref{equ:Blaschke} are the simplest kind of rational maps having Herman rings.

Examples of Herman rings for transcendental holomorphic maps $f:\C\setminus\{0\}\to\C\setminus\{0\}$ were constructed first by Herman in \cite[p.\,609]{Her85}, by studying the complex Arnold family
\begin{equation}\label{equ:Arnold}
A(z)=e^{2\pi\ii t}z e^{\frac{a}{2}(z-\frac{1}{z})},
\end{equation}
where $t,a\in(0,1)$. One may refer to \cite{BKL91b}, \cite{Fag99}, \cite{Gey01}, \cite{FG03}, \cite{FSV04}, \cite{BFGH05} and \cite{FH06} for the further study of Herman rings in this family.

For transcendental meromorphic functions $f:\C\to\EC$, the first example of Herman rings was constructed by Zheng in \cite{Zhe00}, and see \cite{DF04} for an independent work by Dom\'{i}nguez and Fagella.
The location of the poles and the omitted values plays an important role in the existence of Herman rings of transcendental meromorphic functions.

\medskip
The examples of Herman rings mentioned above are all invariant, i.e., they are Fatou components of period one. By quasiconformal surgery, Shishikura constructed cubic rational maps having Herman rings of any given period \cite{Shi87}, and Fagella and Peter proved the existence of Herman rings of any given period in transcendental meromorphic functions \cite{FP12}. In 1980s, Shishikura raised the following:

\begin{ques}[{Shishikura}]
Find an explicit form of a rational map which has a Herman ring of period two and draw its picture.
\end{ques}

This question was motivated by the study in \cite{Shi86,Shi87} (see \cite[p.\,4]{Shi14} for a formal statement).
In fact this question can be also asked for transcendental meromorphic functions.

\medskip
To find the explicit formulas of rational maps and transcendental meromorphic functions with Herman rings of high periods, besides the difficulty of the instability \cite{Man85},
one may observe another obstruction from the following result:

\begin{thmx}\label{thm:No-for-p2}
Any Herman ring of a meromorphic function $f$ (rational or transcendental) of period $p\geq 2$ cannot contain a circle $C$ satisfying $f^{\circ p}(C)=C$.
\end{thmx}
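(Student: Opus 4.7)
My plan is to argue by contradiction, exploiting the symmetry imposed by a round invariant circle via Schwarz reflection. Suppose $C\subset U_0$ is a round circle with $f^{\circ p}(C)=C$, and denote the $f$-cycle of Herman rings by $U_0\to U_1\to\cdots\to U_{p-1}\to U_0$. Since $f^{\circ p}$ is holomorphic on a neighborhood of $C$ (which lies in the Fatou set) and maps the round circle $C$ into itself, the Schwarz reflection principle together with the identity theorem gives
\[
\rho_C\circ f^{\circ p}=f^{\circ p}\circ\rho_C
\]
on a neighborhood of $C$, where $\rho_C\colon\EC\to\EC$ denotes the antiholomorphic reflection in $C$. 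By analytic continuation this extends to the whole domain of $f^{\circ p}$: to all of $\EC$ in the rational case, and to the connected open subset of $\C$ where $f^{\circ p}$ is meromorphic in the transcendental case.

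Since $\rho_C$ commutes with $f^{\circ p}$, it permutes the Fatou components of $f$, and because $C\subset U_0\cap\rho_C(U_0)$ we must have $\rho_C(U_0)=U_0$. Passing to the Koenigs coordinate $\phi_0\colon U_0\to\{1<|w|<r\}$ which linearizes $f^{\circ p}|_{U_0}$ to the irrational rotation $R_\theta$, the induced action of $\rho_C$ becomes a reflection in a round $R_\theta$-invariant circle $\{|w|=s\}$; in order to preserve the annulus, this reflection must interchange the two boundary circles, forcing $s=\sqrt{r}$ and making $\rho_C$ swap the two complementary components $K^+,K^-$ of $\EC\setminus U_0$.

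Next I would introduce the conjugated map $\tilde f:=\rho_C\circ f\circ\rho_C$, which is of the same type as $f$ and satisfies $\tilde f^{\circ p}=f^{\circ p}$. A direct induction gives $\tilde f^{\circ k}(U_0)=\rho_C(U_k)$ for all $k$; in particular $\tilde f(U_0)=\rho_C(U_1)$. The hypothesis $p\geq 2$ forces $f(C)\subset U_1$, which is disjoint from $C\subset U_0$, so $f(C)\not\subset C$; this implies $f$ cannot commute with $\rho_C$, hence $\tilde f\neq f$. Moreover, because each $U_k$ ($k\geq 1$) lies entirely in one of $K^\pm$ while $\rho_C$ interchanges these components, the map $\tilde f$ cycles the Herman rings in an order genuinely different from that of $f$.

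The main obstacle is converting the existence of two distinct meromorphic maps $f\neq\tilde f$ sharing the same $p$-th iterate and the same Fatou set, but inducing incompatible cyclic orders on the Herman ring cycle, into a concrete contradiction. The plan here is to combine the topological constraint that $\rho_C$ swaps $K^+$ with $K^-$ with a parity and orbit-count analysis of the involution $\rho_C$ acting on the set $\{U_1,\ldots,U_{p-1}\}$ (treating separately the subcase in which some $\rho_C(U_k)$ produces auxiliary Herman rings outside the original cycle), and then to exploit the rigidity of biholomorphisms between annuli together with the irrationality of $\theta$ to eliminate each remaining configuration, yielding the desired contradiction.
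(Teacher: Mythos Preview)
Your Schwarz-reflection opening is correct and matches the paper exactly: after normalising $C$ to the unit circle and writing $\tau(z)=1/\bar z$, the identity theorem gives $\tau\circ f^{\circ p}=f^{\circ p}\circ\tau$ on the full domain of $f^{\circ p}$, hence $\tau(U_0)=U_0$; and in the linearising coordinate the induced antiholomorphic involution of the round annulus commuting with $R_\theta$ must have the form $z\mapsto c/\bar z$, so $\tau$ interchanges the two complementary components $K^{+},K^{-}$ of $U_0$. All of this is exactly what the paper uses.

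The genuine gap is that your proposal stops at an outline. Having two meromorphic maps $f\neq\tilde f$ with $f^{\circ p}=\tilde f^{\circ p}$ is not a contradiction in itself---there really are distinct rational maps sharing a common iterate---and your ``parity and orbit-count analysis'' is never carried out. In particular you do not exclude the scenario in which the components $\rho_C(U_k)$ lie in Herman-ring cycles of $f$ disjoint from $\{U_0,\dots,U_{p-1}\}$, where a parity count on the original cycle alone has no force. (A side issue: in the transcendental case your $\tilde f=\rho_C\circ f\circ\rho_C$ is \emph{not} of the same type as $f$, because its essential singularity sits at the centre of $C$ rather than at~$\infty$.)

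The paper does not introduce $\tilde f$ and closes the argument in one stroke rather than via orbit bookkeeping. Once $\tau\circ f^{\circ p}\circ\tau^{-1}=f^{\circ p}$ is established, the reflected annuli $A_i^{*}:=\tau(A_i)$ are taken to form another $p$-cycle of Herman rings for $f$, so that $f(A_1^{*})=A_2^{*}=\tau(A_2)$; but $A_1^{*}=A_1$ since both contain the unit circle, whence also $f(A_1^{*})=f(A_1)=A_2$. This forces $A_2=\tau(A_2)$, which is impossible because $A_2$ lies in one of $K^{\pm}$ while $\tau$ swaps $K^{+}$ with $K^{-}$. The contradiction thus comes from comparing the \emph{single} image component $f(A_1)$ with its reflection; your detour through a second dynamical system $\tilde f$ and a global configuration analysis is precisely what makes the endgame look hard.
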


The main idea to prove Theorem \ref{thm:No-for-p2} is studying the dynamics by symmetric analytic extension. In \S\ref{sec:No-round-circle} we shall prove a stronger result for transcendental meromorphic functions: their Herman rings (of any period) cannot contain periodic circles.
From Theorem \ref{thm:No-for-p2} we know that, for meromorphic functions having $p$-periodic Herman rings with $p\geq 2$, it is hard to obtain the formulas by studying the circle diffeomorphism via following Herman and Yoccoz.

\medskip
In this paper we give an answer to Shishikura's question and draw a picture of the cycle of Herman rings of period $2$ (see Figure \ref{Fig:Herman} for an example).

\begin{thmx}\label{thm:rat-p}
Let $\theta$ be a Brjuno number and $p\geq 2$ an integer. There exists $a_0\geq 3$ such that for any $a>a_0$, there exist $b$ and $u=u(a,b)\in\C\setminus\{0\}$ such that
\begin{equation}\label{equ:f-rat}
f_{a,b}(z)=u z^2\frac{z-a}{1-az}+b
\end{equation}
has a $p$-cycle of Herman rings of rotation number $\theta$ and a super-attracting $p$-cycle  containing $0$.
In particular, if $p=2$, then $u=(ab-1)/(b(b-a))$.
\end{thmx}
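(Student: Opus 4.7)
The plan is to combine the Herman--Yoccoz theorem on analytic circle diffeomorphisms with a Shishikura-style quasiconformal surgery that creates a super-attracting $p$-cycle through $0$ while preserving an invariant Herman ring, and then verify that the output of the surgery lies in the specific two-parameter slice \eqref{equ:f-rat}.

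First I would handle the algebraic condition determining $u$. Since $f_{a,b}(0)=b$ and $f_{a,b}'(0)=0$ (owing to the factor $z^2$), the super-attracting $p$-cycle condition amounts to $f_{a,b}^{\circ (p-1)}(b)=0$. This is a polynomial equation in $u$ of degree at most $p-1$, with coefficients rational in $a,b$; generically it has a nonzero solution $u=u(a,b)$. For $p=2$ the equation is $ub^2(b-a)/(1-ab)=-b$, giving $u=(ab-1)/(b(b-a))$; for larger $p$ one continues inductively, choosing a consistent branch.

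Next I would produce the Herman ring structure by surgery. For $a>3$ and $\theta$ Brjuno, Herman--Yoccoz gives a unique $t=t(a,\theta)\in\R/\Z$ such that the cubic Blaschke product $B_{a,t}(z)=e^{2\pi\ii t}z^2(z-a)/(1-az)$ has an invariant Herman ring $H$ containing $\T$ with rotation number $\theta$; the map $B_{a,t}$ has super-attracting fixed points at both $0$ and $\infty$. I would perform a Shishikura quasiconformal surgery supported in a small topological disk near $0$ (together with companion disks placed so as to receive the new orbit $0\to b\to\cdots\to 0$), replacing the local dynamics $z\mapsto -ae^{2\pi\ii t}z^2$ at $0$ by a model in which $0$ belongs to a super-attracting $p$-cycle, while leaving the dynamics undisturbed on $H$ and on a full basin of $\infty$. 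Solving the resulting invariant Beltrami equation by the measurable Riemann mapping theorem yields a rational map $\tilde F$ of degree $3$ (the surgery does not change the degree), carrying $H$ to a $p$-cycle of Herman rings of rotation number $\theta$ and carrying the orbit of $0$ to the prescribed super-attracting $p$-cycle.

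Finally I would identify $\tilde F$ with some $f_{a',b'}$ in the family. Normalizing so that $\infty$ is the super-attracting fixed point of local degree $2$, that $0$ is critical with $\tilde F(0)=b'$, and that the simple pole of $\tilde F$ lies at $1/a'$ forces $\tilde F(z)=u'z^2(z-a')/(1-a'z)+b'$, and the super-attracting condition recovers $u'=u(a',b')$ by the first step. The remaining freedom in the surgery must then be used to arrange $a'=a$ exactly: for large $a$ the surgery is essentially a small perturbation, and an implicit-function or intermediate-value argument should give, for every $a>a_0$, a choice of surgery producing $a'=a$ and a companion $b=b(a,\theta)$. The main obstacle is precisely this identification step: the QC deformation must be controlled finely enough that the straightened map actually lands in the two-parameter slice \eqref{equ:f-rat} rather than in the ambient moduli space of cubic rational maps, and the super-attracting fixed point at $\infty$ (inherited from the Blaschke factorization) is what kills the extra dimensions. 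A secondary subtlety, flagged by Theorem~\ref{thm:No-for-p2}, is that the new Herman rings cannot contain round circles, so the surgery must intrinsically destroy the circle symmetry of the Blaschke model when unfolding $0$ into a $p$-cycle.
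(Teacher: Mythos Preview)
Your surgery scheme has a fatal gap: it cannot produce a Herman ring of period $p\geq 2$. You start from a Blaschke product $B_{a,t}$ with an \emph{invariant} Herman ring $H$ and propose to modify the dynamics only near $0$ (and at some companion disks) while ``leaving the dynamics undisturbed on $H$''. But if the new map $\tilde F$ agrees with $B_{a,t}$ on $H$, then $\tilde F(H)=H$ and the ring is still period~$1$. Changing the period of the super-attracting point $0$ has no bearing on the period of the Herman ring; the period of a Fatou component is determined by how the map acts on that component, not elsewhere. Your final paragraph even notes, via Theorem~\ref{thm:No-for-p2}, that a period-$p$ ring cannot contain an invariant circle, but you give no mechanism by which a surgery supported away from $H$ could break the symmetry of $H$. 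It cannot. (There is also a degree issue lurking: forcing $0\mapsto b$ with $b$ far from $0$, and then $b\mapsto\cdots$, is not a local modification and typically will not leave the global degree equal to $3$.)

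The paper proceeds in the opposite order. It starts from a quadratic polynomial $P$ that \emph{already} has a $p$-cycle of Siegel disks of rotation number $\theta$, and pastes the exterior of a second quadratic Siegel disk (for $Q(z)=e^{-2\pi\ii\theta}z+z^2$) into a small neighbourhood of the periodic centre, together with translations along the orbit. This Shishikura-type gluing turns the $p$ Siegel disks into a $p$-cycle of Herman rings and simultaneously creates a super-attracting $p$-cycle through $0$; the output is a cubic map in a normalized form $f_\lambda(z)=u_\lambda z^2(z-\alpha_\lambda)/(1-\tfrac{2\alpha_\lambda-3}{\alpha_\lambda-2}z)+\beta_\lambda$. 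The ``for every $a>a_0$'' part is then handled not by an implicit-function argument but by the deformation theory of Herman rings: varying the modulus gives a holomorphic family $\lambda\mapsto\alpha_\lambda$ into $\C\setminus\{0,1,\tfrac32,2,3\}$, Teichm\"uller's extremal modulus theorem forces $\alpha_\lambda\to 2$ as the modulus blows up, and the open mapping theorem then guarantees that $\alpha_\lambda$ sweeps an interval $(2,2+\delta)$. A linear conjugacy converts this to the form \eqref{equ:f-rat} with $a$ ranging over $(a_0,\infty)$.
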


\begin{figure}[!htpb]
  \setlength{\unitlength}{1mm}
  \centering
  \includegraphics[width=0.95\textwidth]{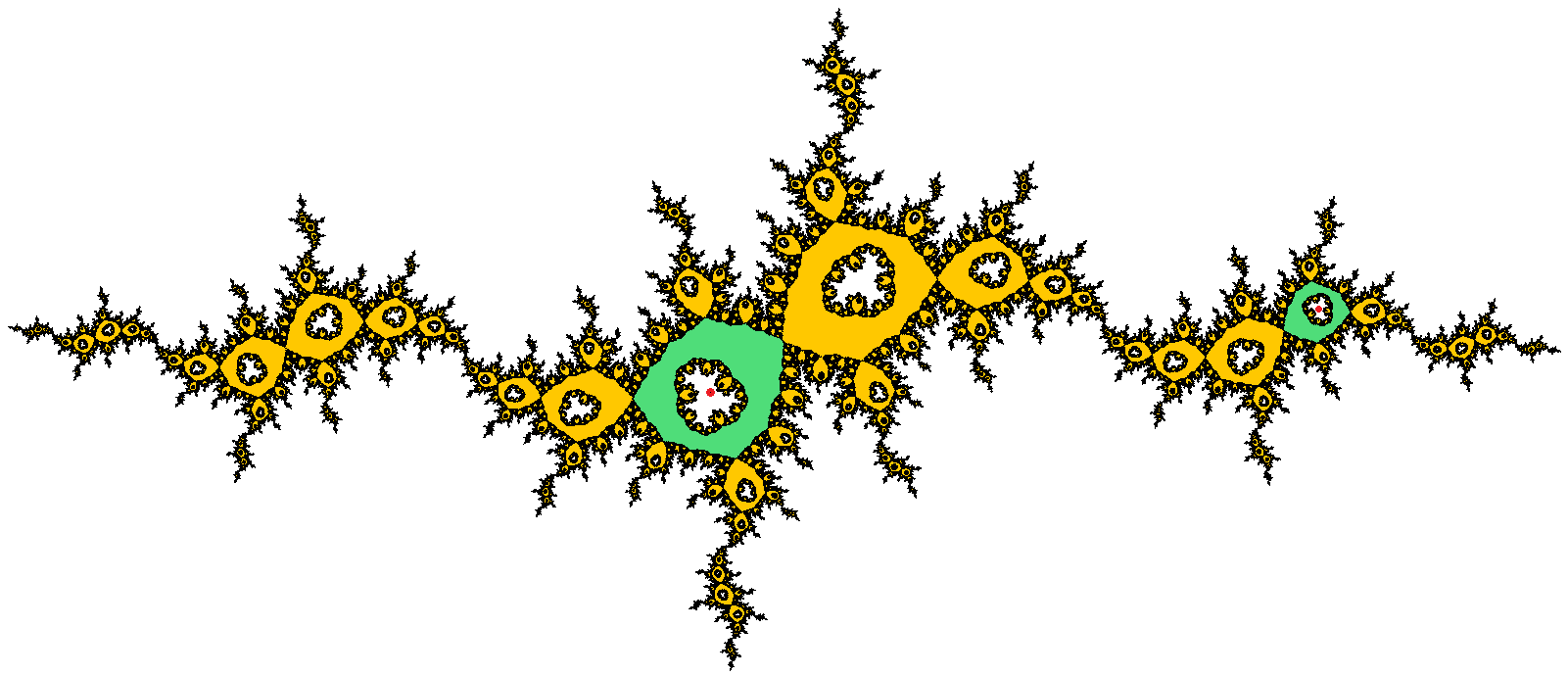}
  \caption{A rational map $f_{a,b}$ with a $2$-cycle of Herman rings (colored green), where $a=4$ and $b\approx11.03081483 -5.91931036\ii$ are chosen such that the rotation number is $\theta=(\sqrt{5}-1)/2$. The Julia set, preimages of the Herman rings and the super-attracting basins, are colored black, dark yellow and white respectively (this picture has been rotated). }
  \label{Fig:Herman}
\end{figure}

For the definition of Brjuno numbers, see \S\ref{sub:rat}.
For the proof of Theorem \ref{thm:rat-p}, we use quasiconformal surgery by deforming the complex structure in the Herman rings (see \S\ref{sec:non-nested-HR} and \cite[\S 6.1]{BF14}), and the key point is to show that the resulting rational maps after surgery have the form \eqref{equ:f-rat}.

\medskip
By a similar idea to Theorem \ref{thm:rat-p}, we can also find some explicit forms of transcendental meromorphic functions having Herman rings of any given period.

\begin{thmx}\label{thm:mero-p-FG}
Let $\theta$ be a Brjuno number and $p\geq 1$ an integer. Then there exist $a$, $b$ and $u=u(a,b)\in\C\setminus\{0\}$ such that
\begin{equation}\label{equ:g-mero-p}
g_{a,b}(z)=u\,\frac{z-b}{z-a}z^2 e^z
\end{equation}
has a $p$-cycle of Herman rings of rotation number $\theta$ and a super-attracting $p$-cycle which is different from $0$.
\end{thmx}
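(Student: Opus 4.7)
The plan is to parallel the proof of Theorem \ref{thm:rat-p}, transferring the surgery from rational Blaschke-like models to a transcendental entire model. Concretely, the strategy has three steps: (i) exhibit a one-parameter family of transcendental entire functions admitting bounded Siegel disks of every period $p\ge 1$ and every Brjuno rotation number; (ii) apply a Shishikura-type quasiconformal surgery to a well chosen member of this family to convert a Siegel cycle into a Herman ring cycle; (iii) verify that the resulting meromorphic map has the announced form \eqref{equ:g-mero-p}.

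For step (i), a natural candidate family is $E_\mu(z)=\mu z^2 e^z$ with $\mu\in\C\setminus\{0\}$, which is the limit $b\to a$ of \eqref{equ:g-mero-p} (with the rational prefactor normalized to $1$). The origin is a superattracting fixed point for every $\mu$ thanks to the factor $z^2$, and the only other critical point is $z=-2$, playing the role of the free critical orbit. Following the Douady-Hubbard polynomial-like mapping philosophy adapted to entire functions, I would isolate a bounded region $W$ in the $\mu$-plane on which the restriction of $E_\mu$ to a neighbourhood of $-2$ is a quadratic-like map. This produces a Mandelbrot-like set $\mathcal{M}$ of period one in $W$ whose internal structure mirrors the classical Mandelbrot set via straightening. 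In particular, for any period $p\ge 1$ and any Brjuno $\theta$, a boundary point of a period-$p$ hyperbolic component of $\mathcal{M}$ provides a parameter $\mu_0$ for which $E_{\mu_0}$ has a bounded Siegel disk cycle of period $p$ and rotation number $\theta$, accumulated by the orbit of $-2$.

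For step (ii), I would perform a Shishikura-type surgery on $E_{\mu_0}$. Denoting the Siegel cycle by $\{\Delta_0,\ldots,\Delta_{p-1}\}$, in $\Delta_0$ (where the first return $E_{\mu_0}^{\circ p}$ is conformally conjugate to rotation by $e^{2\pi\ii\theta}$) I remove a small linearizable inner subdisk, paste in the exterior of a Blaschke-type model carrying the same rotation number and a superattracting fixed critical point, interpolate quasiconformally across the annular collar, and transport the modification through $E_{\mu_0}$ to $\Delta_1,\ldots,\Delta_{p-1}$. The resulting quasiregular map preserves a bounded Beltrami coefficient, forward invariant and trivial off the modified cycle. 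The measurable Riemann mapping theorem then straightens it to a holomorphic map $g$, meromorphic on $\C$ with a single essential singularity at infinity. By design, $g$ retains the superattracting fixed point at $0$, acquires a cycle of Herman rings of period $p$ and rotation number $\theta$ in place of the former Siegel cycle, and gains a new superattracting $p$-cycle coming from the critical point of the inserted Blaschke model.

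Finally, identifying $g$ with $g_{a,b}(z)=u\,\frac{z-b}{z-a}z^2e^z$ reduces to counting zeros, poles and asymptotic behaviour. Since the surgery is compactly supported, the $e^z$ factor persists at infinity, so $g=uR(z)e^z$ for a rational $R$. The preserved double zero at $0$ contributes $z^2$; the new superattracting critical point contributes a simple zero $z-b$; the inserted Blaschke model introduces a unique simple pole, contributing $(z-a)^{-1}$; and a count of singular values rules out further zeros or poles. A M\"obius normalization then produces exactly \eqref{equ:g-mero-p}. The main obstacle, as the abstract signals, lies in step (i): establishing the period-one Mandelbrot-like set in the transcendental family, where the direct application of Douady-Hubbard theory is blocked by the essential singularity at infinity, and one has to perform a delicate restriction of $E_\mu$ to a precompact topological disk on which it genuinely restricts to a quadratic-like mapping of the critical point $-2$.
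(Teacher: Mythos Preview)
Your three-step plan matches the paper's approach closely: step (i) is Proposition~\ref{prop:Siegel-p}, proved in \S\ref{sec:Siegel} by exhibiting quadratic-like restrictions of $E_\lambda(z)=\lambda z^2e^z$ near the free critical point $-2$ and applying Douady--Hubbard; step (ii) is the surgery of \S\ref{subsec:trans-I}, where the inserted model is the inversion-conjugate $\eta_\varepsilon\circ Q\circ\eta_\varepsilon^{-1}$ of $Q(z)=e^{-2\pi\ii\theta}z+z^2$ rather than a Blaschke product per se, though it plays the same role (one simple pole, one extra critical point, rotation on the gluing curve). You correctly identify step (i) as the main technical obstacle.

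One substantive correction in step (iii): the factor $(z-b)$ does \emph{not} arise from the new super-attracting critical point. The new super-attracting $p$-periodic point (the image of the former Siegel center $s_0$) is a critical point of $g$ but is not a zero; indeed for $g_{a,b}$ in \eqref{equ:g-mero-p} one computes $g_{a,b}'(b)=u e^b b^2/(b-a)\neq 0$ when $b\neq 0,a$. The paper instead argues that the asymptotic value $0$ has exactly \emph{three} preimages under $g$ counted with multiplicity: the double zero at $0$ inherited from $E_\lambda$, plus one simple extra zero created inside the modified disk near $s_0$ (the degree-two inserted map $w\mapsto\eta_\varepsilon\circ Q\circ\eta_\varepsilon^{-1}(w)+E_\lambda(s_0)$ hits $0$ exactly once there). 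This preimage count, not the location of the super-attracting cycle, is what gives $g(z)=\frac{(z-b)z^2}{z-a}\,g_2(z)$ with $g_2$ entire and zero-free, hence $g_2=e^\omega$; then the compactly supported nature of the surgery keeps the order equal to one, forcing $\omega$ linear, and a linear conjugacy normalizes it to $e^z$.
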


The proof of Theorem \ref{thm:mero-p-FG} is also based on quasiconformal surgery by turning periodic Siegel disks to Herman rings (see \S\ref{subsec:trans-I}).
Before performing the surgery, we need to choose a family of transcendental entire functions which provides the Siegel disks of any period and any Brjuno rotation number.
A natural candidate is the exponential family $z\mapsto \lambda e^z$ with $\lambda\in\C\setminus\{0\}$ (see \cite{RS09}). However, when the rotation number is of Herman type, the Siegel disks in this family are unbounded (see \cite{Her85}) and it is hard to draw their pictures even in the case when the period of the Siegel disks is one.

Another candidate is the sine family $z\mapsto \lambda\sin (z)$ with $\lambda\in\C\setminus\{0\}$. It has been observed that there exist copies of Mandelbrot set in the parameter space and one may obtain Siegel disks of any period and any Brjuno rotation number. However, for this family it is hard to obtain the specific formulas of the resulting transcendental meromorphic functions after surgery.

We also explain why we don't choose the family $z\mapsto \lambda ze^z$ in \S\ref{subsec:trans-I}.
In view of this, in this paper we consider the following family
\begin{equation}
E_\lambda(z)=\lambda z^2 e^z, \text{\quad where }\lambda\in\C\setminus\{0\},
\end{equation}
and prove that there exists a Mandelbrot-like set in the parameter plane such that the main cardioid corresponds to the hyperbolic component of period one.
By Douady-Hubbard's polynomial-like mapping theory, we conclude that there exist copies of Julia sets of quadratic polynomials with Siegel disks of all posssible periods and Brjuno rotation numbers in the family $E_\lambda$ (see \S\ref{sec:Siegel}).

In fact, so far we are not aware of a published proof of the existence of a family of \textit{explicit} transcendental entire functions which provides the copies of the Mandelbrot-like set such that they produce the Siegel disks of all possible periods and Brjuno rotation numbers, although computer experiments indicate that they exist indeed.

We would like to mention that, for \eqref{equ:g-mero-p} it is not easy to obtain the explicit values of $a$, $b$ and $u$ for a given period $p\geq 2$, since one needs to solve transcendental equations (see \S\ref{subsec:trans-I}). In view of this, as an easier example, we construct another explicit family of transcendental meromorphic functions such that they contain a cycle of Herman rings with period $2$ (see Theorem \ref{thm:mero-2} and Figure \ref{Fig:Herman-mero}).

\medskip
Let $\D$ be the unit disk whose boundary has been assigned the anticlockwise orientation. For an oriented Jordan curve $\gamma$ in $\EC$, there exists a conformal map which maps $\D$ onto a component $D_1$ of $\EC\setminus\gamma$ and respects the orientations of $\partial\D$ and $\gamma$. We call $D_1$ the \textit{interior} of $\gamma$ and denote by $\Int(\gamma)$. The other component $\EC\setminus\overline{D}_1$ is the \textit{exterior} of $\gamma$ and denoted by $\Ext(\gamma)$.
For an annulus $A\subset\EC$ with the oriented core curve $\gamma$, the orientation of $A$ is induced by $\gamma$. The interior $\Int(A)$ and exterior $\Ext(A)$ of $A$ is defined as the components of $\EC\setminus A$ contained in $\Int(\gamma)$ and $\Ext(\gamma)$ respectively.

Suppose $f$ has a cycle of Herman rings $\{A_1,\cdots,A_p\}$ with $p\geq 2$. Then these Herman rings can be oriented such that $f$ respects their orientations.
A cycle of Herman rings $\{A_1,\cdots,A_p\}$ is called \textit{non-nested} if either $A_j\subset \Ext (A_i)$, or $A_j\subset \Int (A_i)$ for any $1\leq i\neq j\leq p$.
From the proofs in \S\ref{sec:non-nested-HR}, one may see that the Herman rings constructed in Theorems \ref{thm:rat-p} and \ref{thm:mero-p-FG} are non-nested.
A $2$-cycle Herman rings $\{A_1,A_2\}$ is called \textit{nested} if either $A_1\subset \Int (A_2)$ and $A_2\subset \Ext (A_1)$, or $A_1\subset \Ext (A_2)$ and $A_2\subset \Int (A_1)$.

The existence of nested Herman rings for rational maps was proved by Shishikura in \cite{Shi87}, and for transcendental meromorphic functions by Fagella and Peter in \cite{FP12}.
In this paper we give the explicit formula of a family of rational maps having a $2$-cycle of nested Herman rings (see Figure \ref{Fig:Herman-inverse2} for an example).

\begin{thmx}\label{thm:period-2-inverse}
For any Brjuno number $\theta$, there exists $(r,t)\in(0,1)^2$ such that $h_{r,t}$ has a $2$-cycle of nested Herman rings of rotation number $\theta$, where
\begin{equation}\label{equ:h-r-t}
h_{r,t}(z)=e^{2\pi\ii t}\left(\frac{z-\frac{1}{r}}{1-\frac{z}{r}}\right)^3\,\frac{z-r^2e^{-2\pi\ii t}}{1-r^2e^{2\pi\ii t}z}.
\end{equation}
\end{thmx}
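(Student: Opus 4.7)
The plan is to carry out Shishikura's quasiconformal surgery \cite{Shi87} in a way that preserves the reflection symmetry built into the formula \eqref{equ:h-r-t}. First I would record the basic structural features of $h_{r,t}$: a direct computation shows that for $r\in(0,1)$ and $t\in(0,1)$ the map commutes with $\sigma(z):=1/\bar z$,
\[
\sigma\circ h_{r,t}=h_{r,t}\circ\sigma,
\]
and that $\{0,1/r\}$ and $\{\infty,r\}=\sigma(\{0,1/r\})$ are two super-attracting $2$-cycles, with critical points $1/r$ (the zero of order $3$) and $r$ (the pole of order $3$) of local degree $3$; the remaining two critical points of $h_{r,t}$ form a $\sigma$-symmetric pair.

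Next I would construct a ``model'' degree-$4$ rational map $F_0$ having the same structural features but with a $2$-cycle of Siegel disks $\{\Sigma_1,\sigma(\Sigma_1)\}$ of rotation number $\theta$ in place of the desired Herman rings (where $\Sigma_1\subset\D$). One source of such a model is Theorem \ref{thm:rat-p} with $p=2$: the family $f_{a,b}$ provides a degree-$4$ rational map with a super-attracting $2$-cycle and a $2$-cycle of Siegel disks of any prescribed Brjuno rotation number, and after a M\"obius normalization this map can be placed into $\sigma$-equivariant form. Alternatively, one may start from a quadratic polynomial $P_\theta(z)=e^{2\pi\ii\theta}z+z^2$ (which has a Siegel disk of rotation number $\theta$ since $\theta$ is Brjuno) and build the symmetric model by a $\sigma$-pairing construction.

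Then I would apply Shishikura's surgery symmetrically under $\sigma$ to convert the Siegel cycle of $F_0$ into a Herman cycle: inside $\Sigma_1$, excise a small invariant subdisk about the rotation center and paste in a suitable rigid rotation, producing an annular region of constant rotation; by equivariance, the dual operation is performed inside $\sigma(\Sigma_1)$. The resulting quasi-regular map carries a bounded invariant measurable complex structure, and straightening it by the Measurable Riemann Mapping Theorem yields a rational map $F_\theta$ with a $2$-cycle of Herman rings $(A_1,A_2)=(A_1,\sigma(A_1))$ of rotation number $\theta$. Nestedness follows at once from $A_1\subset\D$ and $A_2\subset\EC\setminus\OD$.

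Finally I would normalize. The produced map $F_\theta$ is a degree-$4$ rational map, $\sigma$-equivariant up to M\"obius conjugacy, with two super-attracting $2$-cycles whose critical points have local degree $3$. By a M\"obius change of coordinates preserving the reflection, place one cycle at $\{0,1/r\}$ and the other at $\{\infty,r\}$ for a unique $r\in(0,1)$. A parameter count then shows that the space of $\sigma$-equivariant degree-$4$ rational maps with this super-attracting data is precisely the $2$-parameter family $\{h_{r,t}:(r,t)\in(0,1)^2\}$, giving $F_\theta=h_{r,t}$ for some $(r,t)$. The main obstacle is this last identification step: one must verify that the symmetry and super-attracting constraints leave no residual freedom beyond the pair $(r,t)$, and that the $(r,t)$ produced by the construction actually lies in the prescribed open range.
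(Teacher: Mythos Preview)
Your final identification step---recognizing that a $\sigma$-equivariant degree-$4$ rational map with the prescribed super-attracting data must land in the family $h_{r,t}$---is essentially what the paper does, via the characterization of rational maps commuting with $\tau(z)=1/\bar z$ as Blaschke products \cite[Lemma~15.5]{Mil06}, followed by reading off the formula from the zeros, poles, and the orbit relation at $0$.

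Where your proposal diverges, and where it breaks down, is the construction of the map. The paper does \emph{not} redo any surgery here: it simply invokes Shishikura's existing construction \cite[\S9, Part~(B)]{Shi87}, which already produces a $\tau$-symmetric degree-$4$ rational map with a $2$-cycle of nested Herman rings and a super-attracting cycle of the required local-degree pattern; only the algebraic identification remains. Your attempt to rebuild this from a Siegel model has genuine gaps. First, Theorem~\ref{thm:rat-p} is not a source for your model $F_0$: it yields a \emph{cubic} map with \emph{Herman rings}, not a degree-$4$ map with a Siegel $2$-cycle. Second, the surgery you describe---``excise a small invariant subdisk and paste in a rigid rotation''---does not create a Herman ring; a Herman ring needs Julia set on both boundary components, so one must glue in a second dynamical system (as in Lemma~\ref{lem-rat-pre}, a quadratic Siegel map), and each such gluing raises the degree by one. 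Performing it $\sigma$-symmetrically in $\Sigma_1$ and $\sigma(\Sigma_1)$ on a degree-$4$ model would output a degree-$6$ map. Third, ``$A_1\subset\D$ and $A_2\subset\EC\setminus\OD$'' does not by itself force nestedness: you also need $A_1$ to separate $0$ from $\T$, which is exactly what Shishikura's Part~(B) construction is engineered to guarantee and is not automatic from a generic symmetric Siegel-to-Herman surgery. The alternative you mention (a ``$\sigma$-pairing'' from $P_\theta$) is closer in spirit to what Shishikura actually does, but as stated it is only a name, not a construction.
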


\begin{figure}[!htpb]
  \setlength{\unitlength}{1mm}
  \centering
  \includegraphics[width=0.46\textwidth]{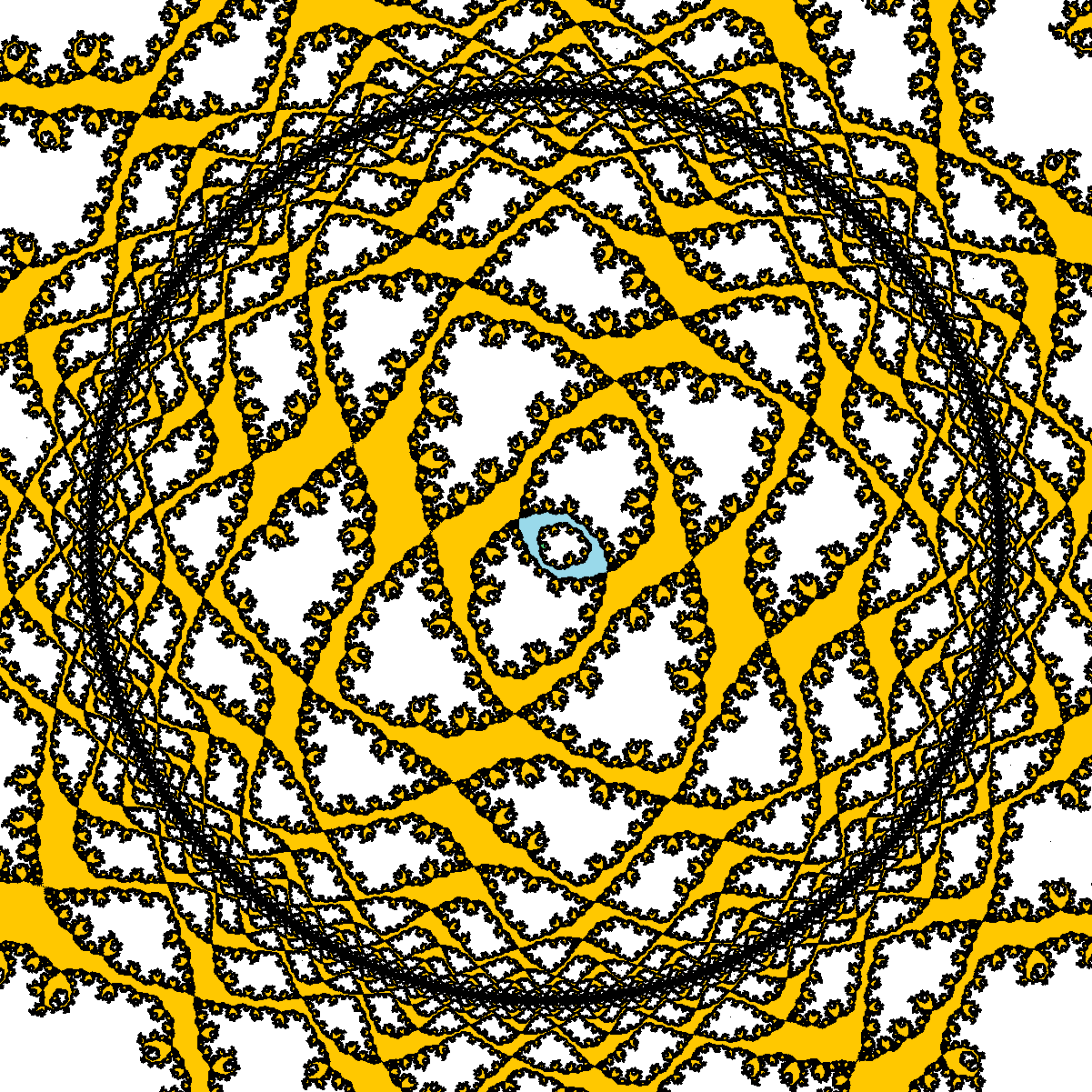}\qquad
  \includegraphics[width=0.46\textwidth]{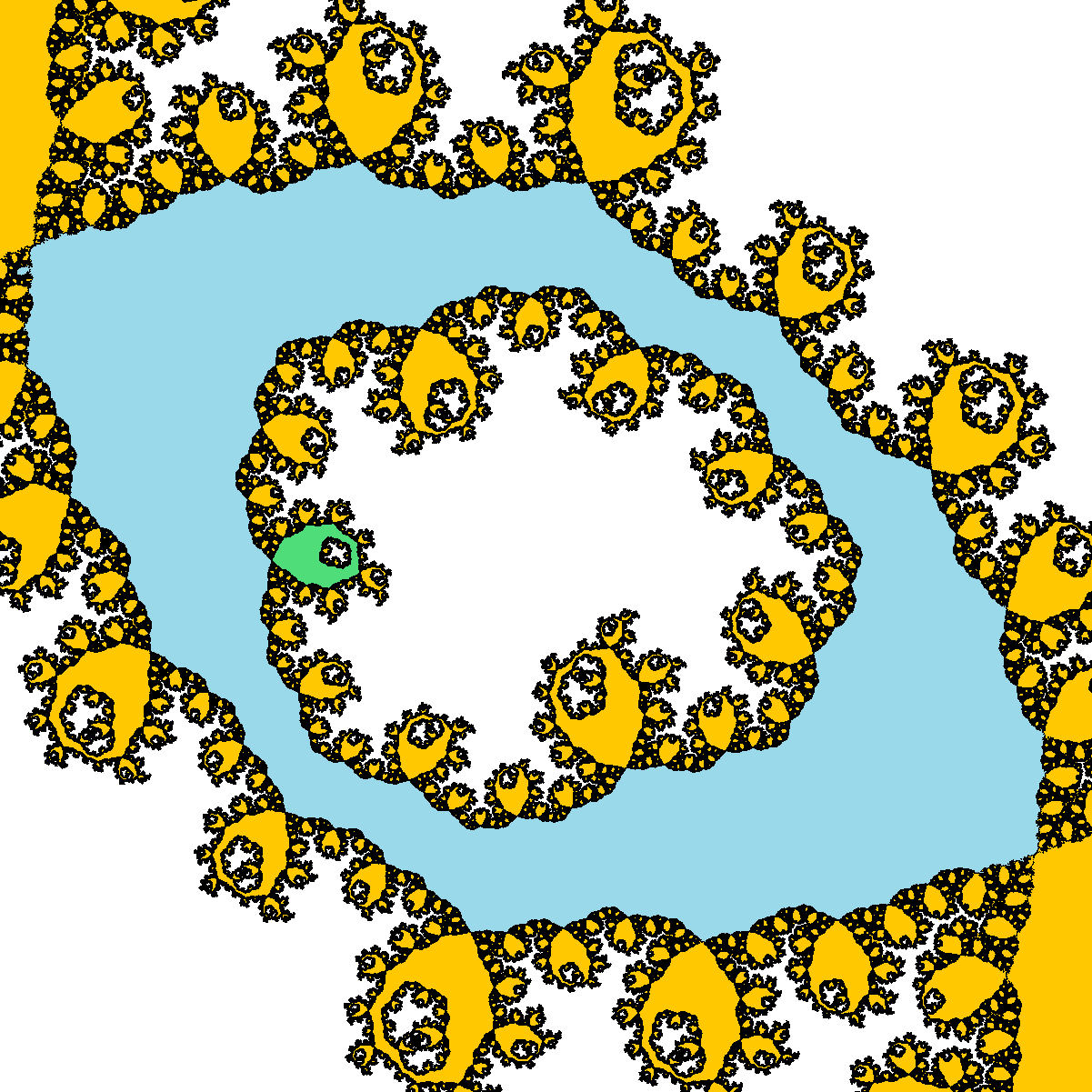}
  \caption{A quartic rational map $h_{r,t}$ with a $2$-cycle of nested Herman rings $A_1\leftrightarrow A_2$, where $r=\frac{1}{40}$ and $t\approx 0.34172383$. The picture on the right is the zoom of the left near one periodic Herman ring $A_1$ (colored green). The other periodic Herman ring $A_2$ is the symmetric image of $A_1$ about the unit circle. The ring $A_1$ in the left picture is too small to see. We color another preimage of $A_2$ light cyan so that one can roughly locate the position of $A_1$ in the left picture. }
  \label{Fig:Herman-inverse2}
\end{figure}

To the best of our knowledge, the only formula of a rational map with Herman rings of period $2$ appears in \cite{Shi86}, where a rough picture of the critical orbits was drawn. That example has the form $f(z)=z^2\frac{z-a}{z-b}+c$ and the three parameters $a$, $b$ and $c$ were found by numerical experiment (trial-and-error method).

To find the specific parameters in Figure \ref{Fig:Herman}, we first draw the parameter space (i.e., $b$-plane) of $f_{a,b}$ globally by fixing some $a$. Then we zoom near the bifurcation locus successively. One can catch the parameter corresponding to the periodic Herman rings of the given rotation number $\theta$ (This can be done similarly for generating Figure \ref{Fig:Herman-mero}). For Figure \ref{Fig:Herman-inverse2}, one can find the parameters in the real $2$-dimensional $(r,t)$-plane.

A cycle of Herman rings of high period may have various configurations (see \cite{Shi89}, \cite{Shi02} and \cite{FP12}). It is a challenging problem to find the explicit formulas for those complicated combinatorics. For various pictures of invariant Herman rings, one may refer to \cite{HR17} and Henriksen's homepage \cite{Hen14} (see also \cite{BBM18}). For the study of the existence of Herman rings of meromorphic functions, see \cite{Mil00b}, \cite{HK04}, \cite{Nay16}, \cite{Yan17}, \cite{HX19}, \cite{Roc20} and the references therein.

\section{Invariant circles versus periodic Herman rings}\label{sec:No-round-circle}

In this section, we prove the following result, which is stronger than Theorem \ref{thm:No-for-p2}.

\begin{thm}\label{thm:no-HR-revisited}
Let $A_1$ be a $p$-periodic Herman ring of $f$, where
\begin{enumerate}
\item $f$ is a rational map and $p\geq 2$; or
\item $f$ is a transcendental meromorphic function and $p\geq 1$.
\end{enumerate}
Then $A_1$ cannot contain any circle $C$ satisfying $f^{\circ p}(C)=C$.
\end{thm}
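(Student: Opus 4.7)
The plan is a proof by contradiction via symmetric analytic extension. Assume $A_1$ is a $p$-periodic Herman ring of $f$ containing a circle $C$ with $f^{\circ p}(C)=C$, and set $F:=f^{\circ p}$.

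First I would introduce the anti-holomorphic reflection $\sigma\colon\EC\to\EC$ across $C$. Because $F$ is meromorphic in a neighbourhood of $C$ and $F(C)=C$, the Schwarz reflection principle yields $F\circ\sigma=\sigma\circ F$ on a neighbourhood of $C$, and hence on the full common domain of definition in $\EC$ by analytic continuation; in particular $\sigma$ permutes the Fatou components of $F$. To locate $C$ inside $A_1$, choose a conformal isomorphism $\phi\colon A_1\to\{r_1<|w|<r_2\}$ intertwining $F|_{A_1}$ with the rotation $R_\theta$. The induced antiholomorphic involution $\phi\sigma\phi^{-1}$ of the annulus commutes with $R_\theta$, and when $\theta$ is irrational a short classification shows that any such involution has the form $w\mapsto r_1r_2 e^{i\alpha}/\bar{w}$ (the type $w\mapsto e^{i\beta}\bar{w}$ is excluded because it fails to commute with $R_\theta$). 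Since the fixed set of $\sigma$ meets $A_1$, we must have $\alpha=0$, whence $\sigma(A_1)=A_1$, the curve $C$ is the ``equator'' $\phi^{-1}(\{|w|=\sqrt{r_1r_2}\})$ of $A_1$, and $\sigma$ interchanges the two components of $\EC\setminus\overline{A_1}$.

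The transcendental case with $p=1$ is relatively direct: here $F=f$ has its only essential singularity at $\infty$. If $C$ is a line, then $\infty\in C\subset A_1$, contradicting the fact that $\infty$ lies in the Julia set of $f$. If instead $C$ is a Euclidean circle with centre $c_0\in\C$, then $\sigma(\infty)=c_0$; combining $F\circ\sigma=\sigma\circ F$ with Picard's great theorem and the bijectivity of $\sigma$ on $\EC$ forces $f$ to take every value except at most two infinitely often in every neighbourhood of $c_0$. Thus $c_0$ would be a second essential singularity, contradicting the hypothesis that $\infty$ is the only one.

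The cases $p\geq 2$ (rational or transcendental) are where the main obstacle lies. Here I would exploit the $f$-cycle $A_1\to A_2\to\cdots\to A_p$: each $A_j$ for $j\geq 2$ is a connected Fatou component disjoint from $C\subset A_1$, and hence lies entirely in one of the two components of $\EC\setminus\overline{A_1}$ interchanged by $\sigma$. Therefore $\sigma(A_j)\neq A_j$ for every $j\geq 2$, and the conjugate map $\tilde f:=\sigma\circ f\circ\sigma$, which satisfies $\tilde f^{\circ p}=F$, produces the ``reflected'' cycle $A_1\to\sigma(A_2)\to\cdots\to\sigma(A_p)\to A_1$ of $F$-invariant Herman rings of rotation number $\theta$, meeting the original cycle only at $A_1$. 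The hard part is turning this symmetry-forced doubling of the cycle into an actual impossibility: in the rational case by combining Shishikura's bound on the number of non-repelling cycles with the shared critical-point budget of $f$, and in the transcendental case by a careful analysis of how $\sigma$ permutes the essential singularities (pre-poles) of $F=f^{\circ p}$ in a manner incompatible with $f$ being meromorphic on $\C$.
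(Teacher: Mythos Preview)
Your overall plan—Schwarz reflection across $C$ to obtain $\sigma\circ f^{\circ p}\circ\sigma=f^{\circ p}$, then exploit $\sigma(A_1)=A_1$—is exactly the paper's approach, and your treatment of the transcendental case $p=1$ is essentially identical: the paper normalises $C$ to the unit circle so that $\tau(z)=1/\bar z$, deduces $\tau\circ f\circ\tau^{-1}=f$ on $\C\setminus\{0\}$, and concludes that $0=\tau(\infty)$ would have to be a second essential singularity. Your auxiliary observation that $C$ is the equator of $A_1$, so that $\sigma$ interchanges the two components of $\EC\setminus\overline{A_1}$ and hence $\sigma(A_j)\neq A_j$ for every $j\geq 2$, is also precisely what is needed.

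The divergence is in the $p\geq 2$ endgame, which you explicitly leave incomplete, proposing to finish via Shishikura's inequality in the rational case or a prepole analysis in the transcendental case. The paper invokes none of that machinery: once $\tau\circ f^{\circ p}\circ\tau^{-1}=f^{\circ p}$ and $A_1^*:=\tau(A_1)=A_1$ are in hand, it asserts that the reflected rings $A_i^*=\tau(A_i)$ constitute another $p$-cycle of Herman rings of $f$ passing through $A_1$, and then the contradiction is immediate because the single Fatou component $A_1=A_1^*$ would have two distinct $f$-images, namely $A_2$ and $\tau(A_2)$. So no counting of cycles or critical points is required; the whole point is that $f(A_1)$ is well defined. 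That said, the paper's step from ``$\tau$ commutes with $f^{\circ p}$'' to ``$\{\tau(A_i)\}$ is an $f$-cycle'' (as opposed to merely a $\tilde f$-cycle for $\tilde f=\tau f\tau$, which is all your reflected cycle gives a priori) is written extremely tersely, and you should examine it carefully rather than simply discarding your own line of attack.
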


\begin{proof}
Suppose that there exists a circle $C$ in $A_1$ such that $f^{\circ p}(C)=C$. Without loss of generality, we assume that $C$ is the unit circle which separates the two boundary components of $A_1$.
Let $\tau(z)=1/\overline{z}$ be the inversion about the unit circle. Then $\tau(z)=\tau^{-1}(z)$. For a set $A$ in $\EC$, we denote $A^*=\tau(A)$.

\medskip
(a) Let $f$ be a rational map having a cycle of Herman rings $\{A_1,A_2,\cdots,A_p\}$, where $p\geq 2$, $A_{i+1}=f^{\circ i}(A_1)$ for $1\leq i\leq p-1$ and $A_1=f(A_p)$. By Schwarz symmetric extension, the map
\begin{equation}\label{equ:F}
F(z):=
\left\{
\begin{array}{ll}
f^{\circ p}(z)  &~~~~~~~\text{if}~z\in \EC\setminus\D, \\
\tau\circ f^{\circ p}\circ\tau^{-1}(z) &~~~~~~\text{if}~z\in \D
\end{array}
\right.
\end{equation}
is rational which coincides with $f^{\circ p}$ in $\EC\setminus\D$. Indeed, $\tau$ is the identity on the unit circle and $F$ is continuous. By the uniqueness of analytic functions, this implies that $\tau\circ f^{\circ p}\circ\tau^{-1}(z)=f^{\circ p}(z)$ for $z\in\D$.
Therefore, we have $\tau\circ f^{\circ p}\circ\tau^{-1}=f^{\circ p}$ on the whole Riemann sphere.

By definition, there exist an irrational number $\theta$, a number $0<r<1$ and a conformal map $\varphi:A_1\to\widetilde{\A}_r=\{\zeta\in\C:r<|\zeta|<\frac{1}{r}\}$ such that
\begin{equation}
\varphi\circ f^{\circ p}\circ\varphi^{-1}(\zeta)=R_\theta(\zeta):=e^{2\pi\ii\theta}\zeta, \text{\quad where }\zeta\in\widetilde{\A}_r.
\end{equation}
Note that $\tau\circ R_\theta\circ\tau^{-1}=R_\theta$ and $\tau\circ f^{\circ p}\circ\tau^{-1}=f^{\circ p}$. We have
\begin{equation}
(\tau\circ\varphi\circ\tau^{-1})\circ f^{\circ p}\circ(\tau\circ\varphi\circ\tau^{-1})^{-1}(\zeta)=e^{2\pi\ii\theta}\zeta, \text{\quad where }\zeta\in\tau(\widetilde{\A}_r)=\widetilde{\A}_r.
\end{equation}
Since $\tau\circ\varphi\circ\tau^{-1}:\tau(A_1)\to\widetilde{\A}_r$ is conformal, we have another cycle of Herman rings $\{A_1^*,A_2^*,\cdots,A_p^*\}$, where $A_{i+1}^*=f^{\circ i}(A_1^*)$ for $1\leq i\leq p-1$ and $A_1^*=f(A_p^*)$.
Note that $A_1$ contains the unit circle. Hence $A_1=A_1^*$. However, we have $f(A_1^*)=A_2^*=\tau(A_2)\neq A_2=f(A_1)$. This is a contradiction.

\medskip
(b) We divide the proof of this part into two cases: $p\geq 2$ and $p=1$. Suppose that $f$ is a transcendental meromorphic function having a cycle of Herman rings of period $p\geq 2$. Similar to \eqref{equ:F}, we define a new $F$ by replacing $\EC\setminus\D$ to $\C\setminus(\D\cup E)$ and $\D$ to $\D\setminus E$, where $E=\widetilde{E}\cup \widetilde{E}^*$ and $\widetilde{E}=\bigcup_{i=0}^{p-1} f^{-i}(\infty)$. Since $\C\setminus E$ is a domain, by the same reason as Part (a), we have $\tau\circ f^{\circ p}\circ\tau^{-1}=f^{\circ p}$ on $\C\setminus E$. Then the rest arguments are analogous.

Suppose that $f$ is a transcendental meromorphic function having a fixed Herman ring. Similar to \eqref{equ:F}, we define another $F$ by replacing $\EC\setminus\D$ to $\C\setminus\D$ and $\D$ to $\D\setminus \{0\}$. Then we have $\tau\circ f\circ\tau^{-1}=f$ on $\C\setminus \{0\}$. This implies that $0$ is an essential singularity, which contradicts the assumption that $f:\C\to\EC$ is meromorphic.
This finishes the proof of Theorem \ref{thm:no-HR-revisited} and hence Theorem \ref{thm:No-for-p2}.
\end{proof}

\begin{rmk}

(1) By Theorem \ref{thm:no-HR-revisited}, transcendental meromorphic functions cannot have  ``symmetric" Herman rings while rational maps and transcendental holomorphic functions $f:\C\setminus\{0\}\to\C\setminus\{0\}$ can.

(2) It is proved in \cite{BD98} that any transcendental holomorphic function $f:\C\setminus\{0\}\to\C\setminus\{0\}$ cannot have Herman rings of period strictly larger than one.
\end{rmk}

\section{Non-nested Herman rings}\label{sec:non-nested-HR}

In this section, we first state the deformation scheme of Herman rings which are motivated by \cite[\S6.1]{BF14}. Then we perform the quasiconformal surgery which transforms the cycles of Siegel disks to Herman rings (motivated by \cite{Shi87} and \cite{FP12}), and apply the deformation theory in the Herman rings to prove that there exist suitable parameters such that the resulting maps have the desired formulas in Theorems \ref{thm:rat-p} and \ref{thm:mero-p-FG} respectively. Finally, another family of transcendental meromorphic functions with $2$-cycle of Herman rings is obtained.

\subsection{Deformations of Herman rings}\label{subsec:DeformationRing}

If a meromorphic function $f$ has a cycle of Herman rings, then one can deform the complex structure in the rings in two ways: changing the conformal modulus and twisting the Herman rings. Combining these two means together, one can produce a holomorphic family of meromorphic functions having Herman rings. The following result is proved in the Main Theorem and Lemma 6.10 in \cite[\S 6.1]{BF14}.

\begin{thm}\label{thm:BF-6.1}
Let $f$ be a meromorphic function (rational or transcendental) having a Herman ring $A$ of modulus $m$. Assume that for all $k\geq 1$, $f^{\circ k}$ maps each component of $f^{-k}(A)$ to $A$ with degree $1$.
Then there exists an analytic family of meromorphic functions $f_\lambda$ parameterized by $\D\setminus\{0\}$, such that
\begin{enumerate}
\item $f$ is conformally conjugate to $f_{\lambda_0}$ with $\lambda_0=e^{-2\pi m}$;
\item $f_\lambda$ has a Herman ring of modulus $\frac{1}{2\pi}\log\frac{1}{|\lambda|}$ for all $\lambda\in\D\setminus\{0\}$; and
\item $f$ and $f_\lambda$ are quasiconformally conjugate and the conjugacy is conformal outside the grand orbit of $A$.
\end{enumerate}
\end{thm}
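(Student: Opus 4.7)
The plan is to perform a Beltrami coefficient surgery supported on the grand orbit of the Herman ring $A$. First I would uniformize $A$ by a conformal isomorphism $\varphi:A\to\mathcal{A}_m$, where $\mathcal{A}_m=\{\zeta\in\C:1<|\zeta|<e^{2\pi m}\}$, normalized so that $\varphi\circ f^{\circ p}\circ\varphi^{-1}=R_\theta$, the rigid rotation by $e^{2\pi\ii\theta}$. Composing $\varphi$ with the logarithm transports $A$ onto the horizontal strip $S_m=\{w\in\C/(2\pi\ii\Z):0<\re w<2\pi m\}$, on which the return dynamics becomes the translation $T_\theta(w)=w+2\pi\ii\theta$.

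On $S_m$ I would construct a holomorphic family of constant Beltrami coefficients $\mu_\lambda^{\sharp}$ for $\lambda\in\D\setminus\{0\}$; constancy guarantees $T_\theta$-invariance automatically. One chooses the family so that the associated complex-affine map $L_\lambda(w)=\alpha(\lambda)w+\beta(\lambda)\overline{w}$ with $\beta/\alpha=\mu_\lambda^{\sharp}$ sends $S_m$ onto a parallelogram whose push-forward under the exponential is a round annulus of modulus $\tfrac{1}{2\pi}\log(1/|\lambda|)$ and of angular twist recording $\arg\lambda$, and such that $\mu_{\lambda_0}^{\sharp}\equiv 0$ for $\lambda_0=e^{-2\pi m}$. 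Pulling $\mu_\lambda^{\sharp}$ back through $\log$ and $\varphi$ yields $\mu_\lambda^A$, an $f^{\circ p}$-invariant Beltrami coefficient on $A$ depending holomorphically on $\lambda$.

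Next I would spread $\mu_\lambda^A$ over the grand orbit $\mathcal{G}:=\bigcup_{k\geq 0,\,0\leq j<p}f^{-k}(f^{\circ j}(A))$. On each forward iterate $f^{\circ j}(A)$ the coefficient is defined by pushforward; on each component $U$ of $f^{-k}(A)$ the hypothesis implies that $f^{\circ k}|_U:U\to A$ is a biholomorphism, so I define $\mu_\lambda|_U$ as the conformal pullback of $\mu_\lambda^A$. Conformal pullback preserves the pointwise modulus of a Beltrami coefficient, hence $\|\mu_\lambda\|_\infty=\|\mu_\lambda^A\|_\infty<1$ on all of $\mathcal{G}$; extending $\mu_\lambda\equiv 0$ off $\mathcal{G}$ produces a global, $f$-invariant Beltrami coefficient (so $f^{*}\mu_\lambda=\mu_\lambda$ a.e.\ on $\EC$ in case (i), on $\C$ in case (ii)). By the measurable Riemann mapping theorem with holomorphic parameters, there is a unique normalized quasiconformal solution $\phi_\lambda:\EC\to\EC$ of $\overline{\partial}\phi_\lambda=\mu_\lambda\,\partial\phi_\lambda$ depending holomorphically on $\lambda$.

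Finally, set $f_\lambda:=\phi_\lambda\circ f\circ\phi_\lambda^{-1}$. The $f$-invariance of $\mu_\lambda$ forces $\overline{\partial}f_\lambda=0$, so $f_\lambda$ is meromorphic: rational in case (i); in case (ii), $\mu_\lambda$ vanishes in a neighborhood of $\infty$, so $\phi_\lambda$ is conformal there and $f_\lambda$ inherits the single essential singularity of $f$ at $\infty$. Property (a) follows because $\mu_{\lambda_0}\equiv 0$ forces $\phi_{\lambda_0}$ to be a conformal automorphism conjugating $f$ to $f_{\lambda_0}$; property (b) follows because $\phi_\lambda(A)$ is a Herman ring of $f_\lambda$ whose uniformization is the integrated parallelogram; property (c) is built in, since $\mu_\lambda\equiv 0$ outside $\mathcal{G}$. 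The principal technical obstacle is controlling the dilatation during spreading: without the degree-one hypothesis, a critical point inside some component of $f^{-k}(A)$ would force unbounded local dilatation near its image, destroying the surgery. The hypothesis is precisely what legitimizes the pullback step and delivers a globally $L^{\infty}$-bounded Beltrami coefficient.
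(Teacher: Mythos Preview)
Your outline is essentially the standard quasiconformal deformation argument from \cite[\S6.1]{BF14}, which is exactly what the paper invokes: note that the paper does not give its own proof of this theorem but simply cites the Main Theorem and Lemma~6.10 of Branner--Fagella. So on the level of strategy you are aligned with the source.

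There is, however, a conceptual error in your final paragraph. You claim that the degree-one hypothesis is what prevents ``unbounded local dilatation'' when spreading $\mu_\lambda$, and that a critical point in some component of $f^{-k}(A)$ would destroy the $L^\infty$ bound. This is false: the pullback of a Beltrami coefficient by \emph{any} holomorphic map satisfies $|f^*\mu(z)|=|\mu(f(z))|$ almost everywhere (since $\overline{f'}/f'$ has modulus one off the discrete critical set), so $\|f^*\mu\|_\infty=\|\mu\|_\infty$ regardless of whether $f$ has critical points. The spreading step and the application of the measurable Riemann mapping theorem go through without the hypothesis. The degree-one assumption in \cite{BF14} is not about dilatation control; it enters in the Teichm\"uller-theoretic part of their argument, ensuring that the grand orbit of $A$ descends to a single torus in the quotient Riemann surface, so that the deformation space is genuinely $\D\setminus\{0\}$ (two real parameters: modulus and twist) and the family $\lambda\mapsto f_\lambda$ is faithfully parameterized. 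Your construction of $f_\lambda$ is fine, but your justification for the hypothesis should be corrected.
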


The original statement of Theorem \ref{thm:BF-6.1} is for rational maps. However, the proof is valid also for transcendental meromorphic functions since the quasiconformal surgery is only performed in the Herman rings. In the following subsections, we will apply Theorem \ref{thm:BF-6.1} to two special families.

\subsection{Formulas: the rational case}\label{sub:rat}

An irrational number $\theta\in(0,1)$ is called of \textit{Brjuno type} if its continued fraction expansion $[0;a_1,a_2,\cdots, a_n,\cdots]$ satisfies $\sum_{n\geq 1}\frac{\log q_{n+1}}{q_n}<+\infty$, where $q_n$ is the denominator of the simplified fraction $\frac{p_n}{q_n}=[0;a_1,a_2,\cdots, a_n]$. We use $\mathscr{B}$ to denote the set of all Brjuno numbers. It is known that every Diophantine number is Brjuno and that $\theta\in\mathscr{B}$ if and only if $1-\theta\in\mathscr{B}$. According to Siegel and Brjuno, every holomorphic germ $f(z)=e^{2\pi\ii\theta}z+O(z^2)$ is locally linearizable at $0$ if $\theta$ is of Brjuno. Yoccoz proved that the Brjuno condition is also necessary for the local linearization of quadratic polynomials \cite{Yoc95}.

For a Jordan curve $\gamma\subset\EC\setminus\{\infty\}$, we use $\Ext(\gamma)$ to denote the component of $\EC\setminus\gamma$ containing $\infty$ and use $\Int(\gamma)$ to denote the other. For an annulus $A\subset\EC\setminus\{\infty\}$, the exterior $\Ext(A)$ and interior $\Int(A)$ of $A$ are defined similarly.
Note that the definition of exterior and interior here is different from that in the introduction. 
We use $\partial_+A$ and $\partial_-A$ to denote the boundary components of $A$ contained in $\Ext(A)$ and $\Int(A)$ respectively.

\begin{lem}\label{lem-rat-pre}
For any given $\theta\in\mathscr{B}$ and integer $p\geq 2$, there exists an analytic family of cubic rational maps $f_\lambda$ parameterized by $\D\setminus\{0\}$:
\begin{equation}
f_\lambda(z)=u_\lambda z^2\frac{z-\alpha_\lambda}{1-\frac{2\alpha_\lambda-3}{\alpha_\lambda-2} z}+\beta_\lambda,
\end{equation}
where $\alpha_\lambda\in\C\setminus\big\{0,1,\frac{3}{2},2,3\big\}$ and $\beta_\lambda,u_\lambda\in\C\setminus\{0\}$ depend analytically on $\lambda$ such that for all $\lambda\in\D\setminus\{0\}$, the following statements hold:
\begin{enumerate}
\item $f_\lambda$ has a $p$-cycle of Herman rings $\{A_i^\lambda,1\leq i\leq p\}$ and a super-attracting $p$-cycle containing $0$;
\item $\Mod(A_1^\lambda)=\frac{1}{2\pi}\log\frac{1}{|\lambda|}$;
\item $f_\lambda$ and $f_{\lambda'}$ are quasiconformally conjugate and the conjugacy is conformal outside the grand orbit of $A_1^\lambda$ for all $\lambda,\lambda'\in\D\setminus\{0\}$; and
\item $A_1^\lambda$ separates $\{0,\frac{\alpha_\lambda(\alpha_\lambda-2)}{2\alpha_\lambda-3},\frac{\alpha_\lambda-2}{2\alpha_\lambda-3}\}$ from $\{1,\infty\}$, where $\{0,\frac{\alpha_\lambda(\alpha_\lambda-2)}{2\alpha_\lambda-3},1,\infty\}$ are the critical points of $f_\lambda$.
\end{enumerate}
\end{lem}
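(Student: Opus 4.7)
The plan is to build a single cubic rational map $F$ possessing all the required dynamical features via Shishikura's quasiconformal surgery, to normalize it by a Möbius change of coordinates to the canonical form stated, and then to invoke Theorem~\ref{thm:BF-6.1} to sweep out the one-parameter family $f_\lambda$.

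To produce $F$, I would glue two cubic-like models via Shishikura's surgery \cite{Shi87}. Using the Mandelbrot-like set built in \S\ref{sec:Siegel}, I would take a first model carrying both a super-attracting $p$-cycle through $0$ and a $p$-cycle of Siegel disks of rotation number $\theta$, and a second model carrying a $p$-cycle of Siegel disks of rotation number $-\theta$. Gluing pairs of Siegel disks along invariant quasi-circles and integrating the induced almost complex structure with the Measurable Riemann Mapping Theorem would then produce a rational map $F$ of degree $3$ with a $p$-cycle of Herman rings of rotation number $\theta$, a super-attracting $p$-cycle, and four critical points whose positions relative to the Herman ring $A_1$ are explicitly controlled by the surgery: two critical points on the super-attracting side and two on the other side.

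For the normalization, I would choose a Möbius transformation sending the super-attracting critical point to $0$ and sending the two critical points on the other side of $A_1$ to $1$ and $\infty$. Any cubic rational map with critical points at $0$ and $\infty$ is, up to an additive constant, of the form $F(z)=u z^2(z-\alpha)/(1-\gamma z)+\beta$. Computing $F'$ and imposing $F'(1)=0$ yields a single linear equation whose solution is $\gamma=(2\alpha-3)/(\alpha-2)$, after which the fourth critical point is forced to lie at $\alpha(\alpha-2)/(2\alpha-3)$ and the finite pole at $(\alpha-2)/(2\alpha-3)$. The exclusion $\alpha\notin\{0,1,3/2,2,3\}$ rules out exactly the degenerate values (pole escaping to $\infty$, coincidences of critical points or of a critical point with the pole), which the surgery can be arranged to avoid.

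Finally, I would apply Theorem~\ref{thm:BF-6.1} to $F$ at the Herman ring $A_1$. Its degree-one hypothesis holds because every critical point of $F$ avoids the grand orbit of $A_1$: two are periodic in the super-attracting cycle, while the other two can only accumulate on the boundary of the Herman ring cycle. The theorem then provides an analytic family $\{\widetilde f_\lambda\}_{\lambda\in\D\setminus\{0\}}$ satisfying (a)--(c), along which the images of the critical points $0,1,\infty$ vary holomorphically; conjugating by the $\lambda$-holomorphic family of Möbius transformations that sends them back to $0,1,\infty$ yields $f_\lambda$ in the normalized form, with $\alpha_\lambda,u_\lambda,\beta_\lambda$ holomorphic. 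Statement (d) transports from $F$ via the fact that the quasiconformal conjugacy is conformal off the grand orbit of $A_1$, so it preserves the combinatorial position of the critical points and pole. The main obstacle is the combinatorial bookkeeping in the surgery step: one must perform the gluing so that, after normalization, the four critical points and the finite pole distribute exactly as (d) demands, which requires a careful choice of the polynomial-like models and of the quasi-circles along which to perform the surgery.
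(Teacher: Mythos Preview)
Your surgery step has a real gap. First, the reference to \S\ref{sec:Siegel} is misplaced: the Mandelbrot-like set there is built in the \emph{transcendental} family $E_\lambda(z)=\lambda z^2 e^z$ and is used only for the meromorphic Theorem~\ref{thm:mero-p-FG}; it does not supply rational (or even polynomial-like) models carrying a super-attracting $p$-cycle through $0$ together with a Siegel $p$-cycle. Second, insisting that the first model already carry \emph{both} a super-attracting $p$-cycle and a Siegel $p$-cycle makes the degree bookkeeping fail: if that model is a polynomial it must be at least cubic (four critical points), and since Shishikura's gluing along invariant curves in Siegel disks neither creates nor destroys critical points (Siegel disks contain none), pasting in any second model of degree $\geq 2$ leaves at least six critical points, forcing degree $\geq 4$ rather than $3$.

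The paper sidesteps this by a more economical surgery using two \emph{quadratic} polynomials only. The first, $P$, has just a $p$-cycle of Siegel disks of rotation number $\theta$, with the $p$-periodic Siegel center placed at $0$; the second, $Q(z)=e^{-2\pi\ii\theta}z+z^2$, has a single \emph{fixed} Siegel disk. One pastes the exterior of an invariant subdisk of $Q$'s Siegel disk, via the inversion $\eta_\varepsilon(z)=\varepsilon/z$, into a small neighborhood of $0$ inside just one of $P$'s Siegel disks, and inserts mere Euclidean translations in the remaining $p-1$ disks. The four critical points $\infty,c_1$ (from $P$) and $\eta_\varepsilon(\infty)=0,\ \eta_\varepsilon(c_2)$ (from $Q$) become exactly the four simple critical points of a cubic. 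The point you are missing is that the super-attracting $p$-cycle through $0$ is not present in either input model but \emph{emerges} from the pasting: $\eta_\varepsilon$ turns $Q$'s super-attracting fixed point $\infty$ into the finite critical point $0$, which then follows the translated orbit $0\mapsto P(0)\mapsto\cdots\mapsto 0$ of period $p$. After normalizing three critical points to $0,1,\infty$ one is already in the stated form; your downstream application of Theorem~\ref{thm:BF-6.1} and the verification that $\alpha_\lambda,\beta_\lambda,u_\lambda$ depend analytically on $\lambda$ are then essentially as in the paper.
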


\begin{proof}
For any given $\theta\in\mathscr{B}$ and integer $p\geq 2$, there exists a quadratic polynomial $P(z)=z^2+\kappa_1 z+\kappa_0$ having a $p$-cycle of Siegel disks $\{\Delta_i:1\leq i\leq p\}$ of rotation number $\theta$ which contains the $p$-periodic point $0$, where $P(\Delta_i)=\Delta_{i+1}$ for $1\leq i\leq p-1$ and $0\in \Delta_1=P(\Delta_p)$.
Let $\Delta$ be the Siegel disk of $Q(z)=e^{-2\pi\ii\theta}z+z^2$ centered at the origin. There exist conformal mappings $\varphi:\Delta_1\to\D$ and $\phi:\Delta\to\D$ which conjugate $P^{\circ p}$ and $Q$ to $R_\theta(\zeta)=e^{2\pi\ii\theta}\zeta$ and $R_{-\theta}(\zeta)=e^{-2\pi\ii\theta}\zeta$ respectively.

\medskip
Given $0<r<1$, we denote $\gamma_1=\varphi^{-1}(\T_r)\subset\Delta_1$ and $\gamma_{i+1}=P^{\circ i}(\gamma_1)\subset\Delta_{i+1}$ for $1\leq i\leq p-1$, where $\T_r=\{\zeta\in\C:|\zeta|=r\}$ is the boundary of $\D_r=\{\zeta\in\C:|\zeta|<r\}$. For a small $\varepsilon>0$, we denote $\eta_\varepsilon(z):=\varepsilon/z$, then
\begin{equation}
U_\varepsilon:=\eta_\varepsilon\big(\EC\setminus\phi^{-1}(\overline{\D}_r)\big)
\end{equation}
is a small neighborhood of $0$, which is a Jordan domain. One can choose a sufficiently small $\varepsilon>0$ such that $\overline{U}_\varepsilon\subset\Int(\gamma_1)$ and the translations of $U_\varepsilon$ satisfy
\begin{equation}
\overline{U}_\varepsilon+P^{\circ i}(0)\subset\Int(\gamma_{i+1}), \text{\quad for all } 1\leq i\leq p-1.
\end{equation}
In particular, $A_{i+1}':=\Int(\gamma_{i+1})\setminus\big(\overline{U}_\varepsilon+P^{\circ i}(0)\big)$ is an annulus whose boundary consists of two real analytic Jordan curves, where $0\leq i\leq p-1$.

Define
\begin{equation}\label{equ:F-3.1}
F(z):=
\left\{
\begin{array}{ll}
P(z)  &~~~~~~~\text{if}~z\in \EC\setminus\bigcup_{i=1}^p\Int(\gamma_i), \\
\eta_\varepsilon\circ Q\circ\eta_\varepsilon^{-1}(z)+P(0) &~~~~~~\text{if}~z\in \overline{U}_\varepsilon, \\
z+P^{\circ (i+1)}(0)-P^{\circ i}(0) &~~~~~~\text{if}~z\in \overline{U}_\varepsilon+P^{\circ i}(0),\quad 1\leq i\leq p-1.
\end{array}
\right.
\end{equation}
By construction, $\varphi$ conjugates $F^{\circ p}=P^{\circ p}$ to the rigid rotation $R_\theta$ on $\partial_+A_1'$, and $\eta_\varepsilon\circ\phi\circ\eta_\varepsilon^{-1}$ also conjugates $F^{\circ p}=\eta_\varepsilon\circ Q\circ\eta_\varepsilon^{-1}$ to $R_\theta$ on $\partial_-A_1'$. To simplify the notations, we identify $A_{p+1}'$ to $A_1'$. There exist continuous maps
$\psi_i:\overline{A}_i'\to \overline{A}_{i+1}'$, where $1\leq i\leq p$, such that
\begin{itemize}
\item $\psi_i|_{\partial A_i'}=F|_{\partial A_i'}$ for $1\leq i\leq p$;
\item $\psi_i:A_i'\to A_{i+1}'$ is a quasiconformal mapping for $1\leq i\leq p$; and
\item $\psi:=\psi_p\circ\cdots\circ\psi_1:A_1'\to A_1'$ is conjugate to $R_\theta$ by a quasiconformal mapping $\xi:A_1'\to\widetilde{\A}_{r_0}$ for some $0<r_0<1$, where $\widetilde{\A}_{r_0}=\{\zeta\in\C:r_0<|\zeta|<\frac{1}{r_0}\}$.
\end{itemize}
We then define $F|_{A_i'}=\psi_i$ for $1\leq i\leq p$. Obviously, $F:\EC\to\EC$ is a quasiregular map which is analytic in $\EC\setminus\bigcup_{i=1}^p \overline{A}_i'$. See Figure \ref{Fig-surgery}.

\begin{figure}[!htpb]
  \setlength{\unitlength}{1mm}
  \centering
  \includegraphics[width=0.91\textwidth]{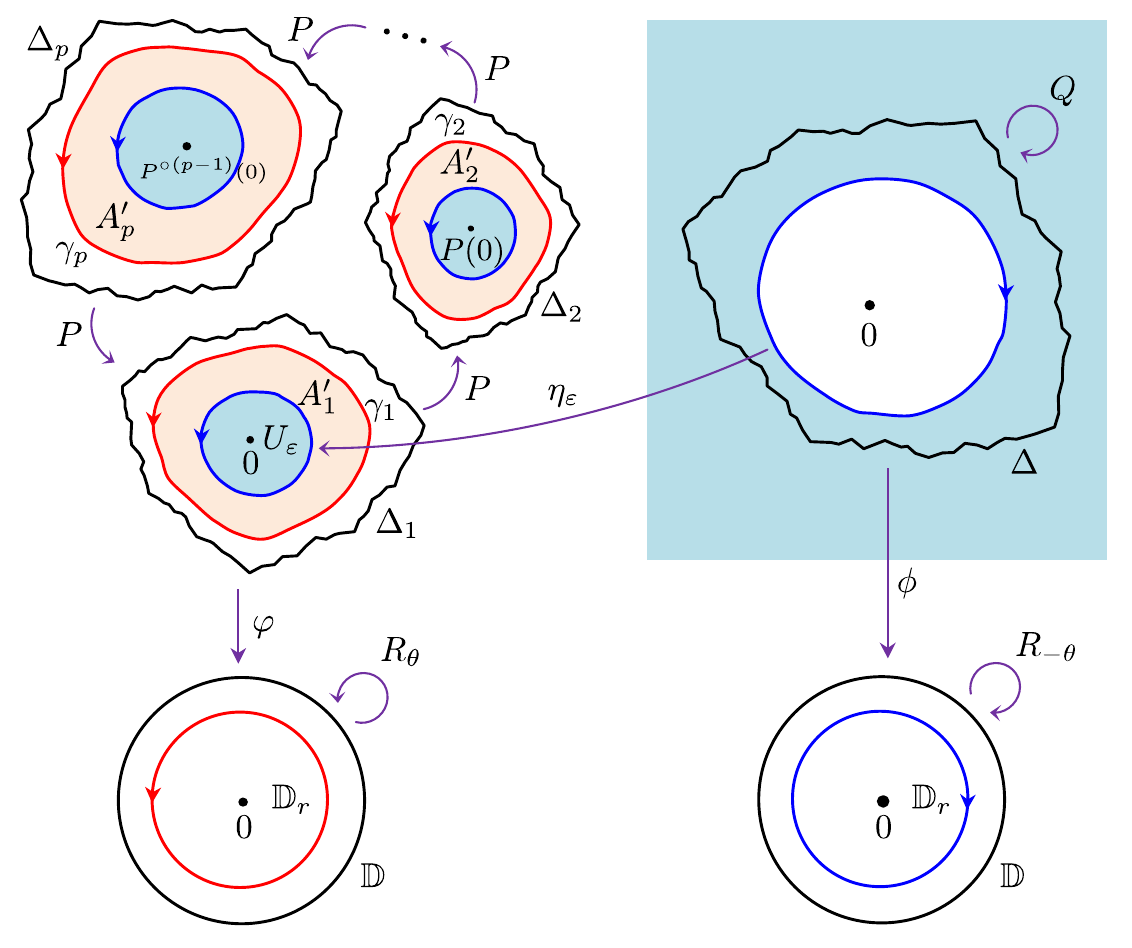}
  \caption{A sketch of the surgery construction.}
  \label{Fig-surgery}
\end{figure}

Now we pull back the standard structure $\sigma_0$ in $\widetilde{\A}_{r_0}$ to define an almost
complex structure $\sigma:=\xi^*(\sigma_0)$ in $A_1'$. Since $\xi:A_1'\to\widetilde{\A}_{r_0}$ conjugates $F^{\circ p}:A_1'\to A_1'$ to $R_\theta:\widetilde{\A}_{r_0}\to \widetilde{\A}_{r_0}$ and $\sigma_0$ is invariant under $R_\theta$, we obtain that $\sigma$ is invariant under $F^{\circ p}:A_1'\to A_1'$.  Pulling back $\sigma|_{A_1'}$ by defining $(F^{\circ i})^*(\sigma)$ in $A_{p+1-i}'$, where $1\leq i\leq p-1$, we obtain an almost complex structure $\sigma$ which is invariant in $\MA'=\bigcup_{i=1}^p A_i'$.

Next, we extend $\sigma$ to $F^{-1}(\MA')$ by letting $\sigma=F^*(\sigma)$. This does not redefine $\sigma$ in $\MA'$. Inductively, one can extend $\sigma$ to the grand orbit of $\MA'$ by letting $\sigma=(F^{\circ n})^*(\sigma)$ on $F^{-n}(\MA')$. At the rest place we define $\sigma=\sigma_0$. Since we pull back the ellipse field $\sigma$ in $\mathcal{A}'$ by holomorphic maps, it implies that $\sigma$ has uniformly bounded dilatation. By the measurable Riemann mapping theorem, there exists a quasiconformal mapping $\Phi:\EC\to\EC$ integrating the almost complex structure $\sigma$ such that $\Phi^*\sigma_0=\sigma$ and $\Phi:(\EC,\sigma)\rightarrow (\EC,\sigma_0)$ is an analytic isomorphism. Hence $f:=\Phi\circ F\circ \Phi^{-1}:(\EC,\sigma_0)\to(\EC,\sigma_0)$ is a rational map.

Let $c_1$ be the unique critical point of $P$ in $\C$. We assume that the quasiconformal mapping $\Phi:\EC\to\EC$ is normalized by $\Phi(\infty)=\infty$, $\Phi(0)=0$ and $\Phi(c_1)=1$.
Let $c_2$ be the unique critical point of $Q$ in $\C$. Then $F$ has four different branching points $\infty$, $0$, $c_1$ and $\eta_\varepsilon(c_2)$. Since the branching point $0$ is $p$-periodic for $F$, it follows that $f$ is a cubic rational map having a super-attracting $p$-cycle containing the critical point $0$, a super-attracting fixed point at $\infty$ and two different critical points $1$ and $c:=\Phi(\eta_\varepsilon(c_2))$ in $\C$.
A direct calculation shows that $f$ has the formula
\begin{equation}
f(z)=u z^2\frac{z-\alpha}{1-\omega z}+\beta,
\end{equation}
where $\omega=\frac{2\alpha-3}{\alpha-2}$, $\alpha\in\C\setminus\big\{0,1,\frac{3}{2},2,3\big\}$ and $\beta ,u \in\C\setminus\{0\}$ satisfy $f^{\circ p}(0)=0$.
In particular, $f$ has four simple critical points $0$, $1$, $\infty$ and $c=\frac{\alpha(\alpha-2)}{2\alpha-3}$. By the surgery construction, $f$ has a $p$-cycle of Herman rings $\{A_i:1\leq i\leq p\}$, where $\Phi(A_i')\subset A_i$. Moreover, the Herman ring $A_1$ separates $\{0,c,1/\omega\}$ from $\{1,\infty\}$, where $1/\omega=c/\alpha$ is the unique pole of $f$ in $\C$.

\medskip
From the proof of Theorem \ref{thm:BF-6.1} (see \cite[\S 6.1]{BF14}), one obtains an analytic family of cubic rational maps $f_\lambda=\Phi_\lambda\circ f\circ \Phi_\lambda^{-1}$ parameterized by $\D\setminus\{0\}$, where $\Phi_\lambda:\EC\to\EC$ is an analytic family of quasiconformal mappings fixing $0$, $1$ and $\infty$ which is parameterized by $\D\setminus\{0\}$. Note that $f$ has exactly four critical points $\{0,1,\infty, c\}$. This implies that $f_\lambda$ has four critical points $\{0,1,\infty,c_\lambda=\Phi_\lambda(c)\}$ and it can be written as:
\begin{equation}
f_\lambda(z)=u_\lambda z^2\frac{z-\alpha_\lambda}{1-\omega_\lambda z}+\beta_\lambda,
\end{equation}
where $\omega_\lambda=\frac{2\alpha_\lambda-3}{\alpha_\lambda-2}$, $\alpha_\lambda\in\C\setminus\big\{0,1,\frac{3}{2},2,3\big\}$ and $\beta_\lambda,u_\lambda\in\C\setminus\{0\}$ satisfy $f_\lambda^{\circ p}(0)=0$. Moreover, $c_\lambda=\frac{\alpha_\lambda(\alpha_\lambda-2)}{2\alpha_\lambda-3}=\alpha_\lambda/\omega_\lambda$. It suffices to prove that the coefficients $\alpha_\lambda$, $\beta_\lambda$ and $u_\lambda$ depend analytically on $\lambda$.

Since $1/\omega=c/\alpha$ is the unique pole of $f$ in $\C$, it follows that $1/\omega_\lambda=\Phi_\lambda(1/w)$ is the unique pole of $f_\lambda$ and $\lambda\mapsto \omega_\lambda$ is analytic.
For fixed $z\in\C\setminus\{1/\omega\}$, the map $\lambda\mapsto f_\lambda(z)$ is analytic. Hence $\beta_\lambda=f_\lambda(0)$, $c_\lambda=\Phi_\lambda(c)$ and $\alpha_\lambda=c_\lambda\omega_\lambda$ depend analytically on $\lambda$. Finally, $\lambda\mapsto u_\lambda=\frac{1-\omega_\lambda}{1-\alpha_\lambda}\big(f_\lambda(1)-\beta_\lambda\big)$ is also analytic.
The proof is complete.
\end{proof}

\begin{rmk}
The principle of the surgery in the proof of Lemma \ref{lem-rat-pre} is similar to \cite[\S 9]{Shi87}: by pasting two cycles of Siegel disks together to obtain a cycle of Herman rings. But the method of constructing quasiregular map is slightly different. We paste the dynamics of $Q$ in the $p$-cycle of Siegel disks of $P$ by Euclidean translations while Shishikura pasted $p$ Riemann hemispheres along the periodic curves in the $p$-cycle of Siegel disks.
\end{rmk}

\begin{proof}[Proof of Theorem \ref{thm:rat-p}]
By Lemma \ref{lem-rat-pre}(d), $A_1^\lambda$ is an annulus separating $\{0,\frac{\alpha_\lambda(\alpha_\lambda-2)}{2\alpha_\lambda-3}$, $\frac{\alpha_\lambda-2}{2\alpha_\lambda-3}\}$ from $\{1,\infty\}$. By Lemma \ref{lem-rat-pre}(b), the modulus of $A_1^\lambda$ tends to infinity as $\lambda$ tends to zero.
According to Teichm\"{u}ller's modulus extremal theorem \cite[Theorem 4.7, p.\,72]{Ahl10}, this implies that
$$\lim_{\lambda\to 0}\alpha_\lambda=2.$$
Note that we have a holomorphic map $\alpha_\lambda:\D\setminus\{0\}\to\C\setminus\{0,1,\frac{3}{2},2,3\}$.
Therefore, $\alpha_\lambda$ can be extended to a holomorphic map $\widetilde{\alpha}_\lambda:\D\to\C\setminus\{0,1,\frac{3}{2},3\}$ with $\widetilde{\alpha}_\lambda(0)=2$ by removable singularity theorem.
By the openness of $\widetilde{\alpha}_\lambda$, there exists a number $0<\delta\leq 1$ such that $\widetilde{\alpha}_\lambda(\D)$ contains the segment $[2,2+\delta)$. In particular, $\alpha_\lambda(\D)$ contains the open interval $(2,2+\delta)$.

For $\alpha_\lambda\in(2,2+\delta)$, we denote $L_\lambda(z)=\kappa_\lambda z$, where $\kappa_\lambda=\sqrt{\frac{2\alpha_\lambda-3}{\alpha_\lambda(\alpha_\lambda-2)}}>1$. Then
\begin{equation}
L_\lambda\circ f_\lambda\circ L_\lambda^{-1}(z)=\frac{u_\lambda}{\kappa_\lambda^2} z^2\frac{z-\alpha_\lambda\kappa_\lambda}{1-\alpha_\lambda\kappa_\lambda z}
+\kappa_\lambda \beta_\lambda.
\end{equation}
Set $a=\alpha_\lambda\kappa_\lambda$, $u=u_\lambda/\kappa_\lambda^2$ and $b=\kappa_\lambda \beta_\lambda$. We have
\begin{equation}
a^2=\frac{\alpha_\lambda(2\alpha_\lambda-3)}{\alpha_\lambda-2}=5+2\Big((\alpha_\lambda-2)+\frac{1}{\alpha_\lambda-2}\Big)>5+2(\delta+\delta^{-1}).
\end{equation}
The first statement of Theorem \ref{thm:rat-p} holds if we set $a_0:=\sqrt{5+2(\delta+\delta^{-1})}\geq 3$.

If $p=2$, by solving $f_{a,b}^{\circ 2}(0)=0$, which is equivalent to $f_{a,b}(b)=0$, we have $u=(a b-1)/(b(b-a))$.
\end{proof}

\begin{rmk}
Similar to \cite[Theorem B]{FG03}, one can prove that there is no uniform bound of $a_0$ for all $\theta\in\mathscr{B}$. We believe that in Theorem \ref{thm:rat-p} one can choose $a_0=3$ for all Diophantine (even Herman) numbers $\theta$, which is supported by computer experiments.
\end{rmk}

\subsection{Formulas: the transcendental case I}\label{subsec:trans-I}

Before performing the surgery which turns the periodic Siegel disks to Herman rings, we need to know the existence of $p$-cycle of Siegel disks in some specific transcendental entire functions.

\begin{prop}\label{prop:Siegel-p}
For any $\theta\in\mathscr{B}$ and $p\geq 1$, there exists $\lambda\in\C\setminus\{0\}$ such that $E_\lambda(z)=\lambda z^2 e^z$ has a $p$-cycle of Siegel disks with rotation number $\theta$.
\end{prop}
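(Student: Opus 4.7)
The plan is to establish a Mandelbrot-like family in the parameter $\lambda\in\C\setminus\{0\}$ of $E_\lambda(z)=\lambda z^2 e^z$ via Douady--Hubbard's polynomial-like mapping theory. A direct computation gives $E_\lambda'(z)=\lambda z(z+2)e^z$, so $E_\lambda$ has only two critical points: the super-attracting fixed point $0$ (of local degree $2$) and the ``free'' critical point $-2$, with critical value $v_\lambda=4\lambda e^{-2}$. Thus $\{E_\lambda\}$ is effectively a one-parameter family with a single active critical orbit, and one expects a bifurcation picture reminiscent of the quadratic family $z\mapsto z^2+c$.

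For $p=1$ I would proceed directly: solving $E_\lambda(z_0)=z_0$ with $z_0\neq 0$ gives $\lambda z_0 e^{z_0}=1$, whence $E_\lambda'(z_0)=z_0+2$. Setting $z_0:=e^{2\pi\ii\theta}-2$ and $\lambda:=1/(z_0 e^{z_0})$ then produces a fixed point with multiplier $e^{2\pi\ii\theta}$, and the Brjuno--Siegel linearization theorem gives an invariant Siegel disk there.

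For $p\geq 2$ I would construct a Mandelbrot-like family. First I would locate the center of the main cardioid: solving $E_\lambda(-2)=-2$ gives $\lambda_0=-e^2/2$, and $-2$ is a super-attracting fixed point of $E_{\lambda_0}$. For $\lambda$ in a small topological disk $W\ni\lambda_0$, the plan is to build simply connected domains $U_\lambda\Subset V_\lambda$ with $-2\in U_\lambda$, varying holomorphically in $\lambda$, such that $E_\lambda\colon U_\lambda\to V_\lambda$ is a proper branched covering of degree $2$. The rapid decay $E_\lambda(z)\to 0$ as $\re z\to-\infty$ ensures that the component of $E_\lambda^{-1}(V_\lambda)$ containing $-2$ is relatively compact in $V_\lambda$, despite $E_\lambda$ being transcendental. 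After checking the winding hypothesis for the critical value $v_\lambda$ as $\lambda$ traces $\partial W$, Douady--Hubbard's straightening then yields a homeomorphism from the connectedness locus of this family onto the Mandelbrot set $\mathcal{M}$, carrying the main cardioid to the period-$1$ hyperbolic component around $\lambda_0$. Pulling back any parameter $c\in\mathcal{M}$ for which $z^2+c$ admits a $p$-cycle of Siegel disks with rotation number $\theta$ yields the desired $\lambda$, since hybrid equivalence preserves periods and rotation numbers of Siegel disks.

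The hard part will be the rigorous construction of this polynomial-like restriction. Because $E_\lambda$ is transcendental, $E_\lambda^{-1}(V_\lambda)$ has infinitely many components accumulating at $-\infty$, so I must verify that the one containing $-2$ is a Jordan domain compactly contained in $V_\lambda$, that no other critical or asymptotic value disturbs the degree-$2$ covering over $V_\lambda$, and that the family $(U_\lambda,V_\lambda)_{\lambda\in W}$ satisfies the holomorphic-motion and winding-number hypotheses uniformly on $\overline{W}$. The explicit shape $\lambda z^2 e^z$, which decays rapidly at $-\infty$ and behaves like $\lambda z^2$ near $0$, is what should make these verifications feasible and is presumably why this family is preferred over $z\mapsto\lambda z e^z$ or $z\mapsto \lambda e^z$.
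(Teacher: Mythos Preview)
Your overall strategy coincides with the paper's: exhibit a Mandelbrot-like set in the $\lambda$-plane of $E_\lambda$ via Douady--Hubbard, then pull back Siegel parameters from the quadratic family. Your direct treatment of $p=1$ (solve $\lambda z_0 e^{z_0}=1$, observe $E_\lambda'(z_0)=z_0+2$, set $z_0=e^{2\pi\ii\theta}-2$) is a correct shortcut that the paper does not take; it handles all $p\geq 1$ uniformly through the Mandelbrot copy.

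There is, however, a genuine gap in your plan for $p\geq 2$. You propose to work over a \emph{small} topological disk $W\ni\lambda_0=-e^2/2$ and then invoke Douady--Hubbard. But the theorem requires, in addition to the winding condition you mention, that the critical value $v_\lambda$ lie in $V_\lambda\setminus U_\lambda$ for every $\lambda\in\partial W$. On a small disk about $\lambda_0$ this fails outright: all nearby parameters are still in the period-one hyperbolic component, so $-2$ is attracted to a fixed point and $v_\lambda\in U_\lambda$. The connectedness locus of the quadratic-like family over such a $W$ is all of $\overline{W}$, and no homeomorphism onto $\mathbb{M}$ results; in particular you get no access to Siegel parameters of period $p\geq 2$.

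The paper resolves this by taking the parameter disk large enough to contain the entire baby Mandelbrot set: explicitly $\mathcal{D}=\{\tfrac12\le|\lambda|\le 50,\ \tfrac1{10}\le\arg\lambda\le 2\pi-\tfrac1{10}\}$ inside $\Lambda=\{\tfrac25<|\lambda|<51\}\setminus\R^+$, with $V_\lambda=\{\tfrac{1}{16|\lambda|}<|z|<30\}\setminus\R^+$ and $U_\lambda$ the component of $E_\lambda^{-1}(V_\lambda)$ containing $-2$, cut out of a fundamental domain $\Omega_0^\lambda$ bounded by preimages of $\R^+$. The substantive work is then a chain of explicit numerical estimates showing $\overline{U_\lambda}\subset V_\lambda$ for all $\lambda\in\Lambda$ and $v_\lambda\notin U_\lambda$ along each of the four boundary arcs of $\mathcal{D}$. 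So your intuition that the transcendental geometry is the delicate point is right, but the parameter region must be large rather than small, and verifying the escape condition on its boundary is where the real effort lies.
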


Fagella proved that there exists a baby Mandelbrot set in the parameter space of $G_\lambda(z)=\lambda z e^z$ \cite{Fag95}. However, the corresponding main cardioid is the hyperbolic component of period two. Therefore, her result implies that for any given $p\geq 1$, there exists $\lambda$ such that $G_\lambda$ contains a $2p$-cycle of Siegel disks of any given rotation number in $\mathscr{B}$. For this family, Kremer proved that there exists a hyperbolic component  in $\C\setminus\overline{\D}$ attached at $e^{2\pi\ii\frac{q}{p}}$ for all rational numbers $\frac{q}{p}$ with $(p,q)=1$ \cite{Kre01}. However, it is not known whether these hyperbolic components are bounded and one cannot conclude that the boundary of these hyperbolic components contain all types of Siegel parameters.

Katagata proved the existence of transcendental entire functions having quadratic Siegel disks of any given period and any rotation number. However, the formulas of those functions are not known, they contain wandering domains and hard to be written as an analytic family \cite{Kat17}.

The family in Proposition \ref{prop:Siegel-p} was studied first by Fagella and Garijo in \cite{FG03a} and \cite{FG07} in a general form $z\mapsto\lambda z^m e^z$ with $m\geq 2$, and the main study objects in the parameter planes are the capture zones, but not the renormalizable parameters near the free critical point.
We delay the proof of Proposition \ref{prop:Siegel-p} to \S\ref{sec:Siegel}.

\begin{proof}[Proof of Theorem \ref{thm:mero-p-FG} assuming Proposition \ref{prop:Siegel-p}]
We only give a sketch of the proof since the idea is similar to Lemma \ref{lem-rat-pre}.
For any given $\theta\in\mathscr{B}$ and $p\geq 2$, by Proposition \ref{prop:Siegel-p}, we choose $\lambda\in\C\setminus\{0\}$ such that $E_\lambda(z)=\lambda z^2 e^z$ has a $p$-cycle of Siegel disks $\{\Delta_i:1\leq i\leq p\}$ of rotation number $\theta$ containing the $p$-periodic point $s_0\neq 0$, where $E_\lambda(\Delta_i)=\Delta_{i+1}$ for $1\leq i\leq p-1$ and $s_0\in \Delta_1=E_\lambda(\Delta_p)$.
Let $\Delta$ be the Siegel disk of $Q(z)=e^{-2\pi\ii\theta}z+z^2$.

Similar to Lemma \ref{lem-rat-pre}, we paste the exterior of an invariant proper subdisk of $\Delta$ to a small neighborhood of $s_0$ in $\Delta_1$. Then a series of Euclidean translations are added to the orbit of a small neighborhood of $s_0$. By adopting the same notations as in Lemma \ref{lem-rat-pre} (see also Figure \ref{Fig-surgery}), the pre-quasiregular map $G$ is defined as
\begin{equation}
G(z):=
\left\{
\begin{array}{ll}
E_\lambda(z)  &~~~~~~~\text{if}~z\in \EC\setminus\bigcup_{i=1}^p\Int(\gamma_i), \\
\eta_\varepsilon\circ Q\circ\eta_\varepsilon^{-1}(z)+E_\lambda(s_0) &~~~~~~\text{if}~z\in \overline{U}_\varepsilon, \\
z+E_\lambda^{\circ (i+1)}(s_0)-E_\lambda^{\circ i}(s_0) &~~~~~~\text{if}~z\in \overline{U}_\varepsilon+E_\lambda^{\circ i}(s_0),\quad 1\leq i\leq p-1.
\end{array}
\right.
\end{equation}
After designating $p$ quasiconformal mappings $\psi_i$, where $1\leq i\leq p$, in the corresponding annuli $A_i'$ respectively, such that their composition $\psi=\psi_p\circ\cdots\circ\psi_1$ is quasiconformally conjugate to the irrational rotation $R_\theta$, one can obtain a quasiregular map $G:\C\to\EC$, which admits an invariant ellipse field.

There exists a quasiconformal mapping $\Phi:\EC\to\EC$ which fixes $0$, $s_0$ and $\infty$, such that $g=\Phi\circ G\circ\Phi^{-1}:\C\to\EC$ is a meromorphic function having the following properties:
\begin{itemize}
\item[(i)] $g$ is holomorphic in $\C$ except it has exactly one simple pole $a\in\C\setminus\{0\}$;
\item[(ii)] $g$ has a super-attracting fixed point $0$ with local degree $2$ and a super-attracting $p$-cycle containing $s_0$;
\item[(iii)] $0$ is the unique asymptotic value and $0$ has exactly three preimages (counting with multiplicity); and
\item[(iv)] $g$ has a $p$-cycle of Herman rings $\{A_i:1\leq i\leq p\}$.
\end{itemize}
By (i) and (ii), $g$ has the following form
\begin{equation}
g(z)=\frac{z^2}{z-a}\, g_1(z),
\end{equation}
where $g_1:\C\to\C$ is a transcendental entire function. By (iii), $g$ can be written as
\begin{equation}
g(z)=\frac{(z-b)z^2}{z-a}\, g_2(z),
\end{equation}
where $b\in\C\setminus\{a\}$ and $g_2:\C\to\C$ is a non-constant transcendental entire function which avoids the origin. Hence $g_2$ can be written as
 $g_2(z)=e^{\omega(z)}$, where $\omega:\C\to\C$ is a non-constant entire function. Since the map $E_\lambda$ around $\infty$ has not been modified during the surgery, the order of $g$ is one and $\omega(z)$ is linear. By replacing new parameters $a$ and $b$ up to linear conjugacy if necessary, one can write $g$ as \eqref{equ:g-mero-p}.
\end{proof}

According to the proof of Theorem \ref{thm:mero-p-FG}, by a direct calculation, it follows that there exist $a$, $b\in\C\setminus\{0\}$
such that the function
\begin{equation}
g(z)=\frac{a-1}{b-1}\,\frac{z-b}{z-a}\,z^2e^{\xi (z-1)}, \text{\quad where } \xi=\frac{2a-ab-1}{(a-1)(b-1)},
\end{equation}
has a Herman ring of period one and a super-attracting fixed point at $1$.
But it is hard to obtain the formula of $u$ in $a$ and $b$ when $p\geq 2$.

\subsection{Formulas: the transcendental case II}\label{subsec:trans-II}

In view of the difficulty to generate the explicit parameters of $g_{a,b}$ with Herman rings of period at least $2$, in this subsection we provide another family of entire functions with a $2$-cycle of Siegel disks, which will be used for generating the explicit formula of transcendental meromorphic functions with Herman rings of period two.

\begin{lem}\label{lem:Siegel-2}
For any $\theta\in\mathscr{B}$, there exists $b\in\C\setminus\{0\}$ such that $E(z)=-z e^{z-b}+b$ has a $2$-periodic Siegel point $0$ whose rotation number is $\theta$.
\end{lem}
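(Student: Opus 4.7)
The plan is to pick $b\in\C\setminus\{0\}$ so that $\{0,b\}$ forms a $2$-cycle of $E$ whose multiplier is $e^{2\pi\ii\theta}$, and then to conclude by the Siegel--Brjuno linearization theorem (cited in \S\ref{sub:rat}) applied to the germ $E^{\circ 2}$ at the fixed point $0$. The cycle structure is automatic for every $b$: $E(0)=b$ and $E(b)=-b\cdot e^{0}+b=0$. A direct computation from $E'(z)=-(1+z)e^{z-b}$ gives $E'(0)=-e^{-b}$ and $E'(b)=-(1+b)$, so the multiplier of the cycle equals
\begin{equation*}
\Psi(b):=E'(0)\,E'(b)=(1+b)e^{-b}.
\end{equation*}

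Thus the lemma reduces to finding a nonzero root of the transcendental equation $\Psi(b)=e^{2\pi\ii\theta}$. The key step is to show that $\Psi:\C\to\C$ is surjective, and I plan to do this by substitution: writing $w=b+1$ turns $\Psi(b)=c$ into $we^{-w}=c/e$, and then $v=-w$ turns it into $ve^{v}=-c/e$, which is solvable for every $c\neq 0$ via any branch of the Lambert $W$-function (equivalently, since $\Psi$ is entire of order one with $\Psi'(b)=-be^{-b}$, $\Psi(0)=1$ and $\Psi(-1)=0$, a short Hadamard-factorization argument rules out any omitted value). Since $\theta\in\mathscr{B}$ is irrational, $e^{2\pi\ii\theta}\neq 1=\Psi(0)$, so any root $b$ of $\Psi(b)=e^{2\pi\ii\theta}$ automatically satisfies $b\neq 0$, as required.

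With such a $b$ fixed, the proof concludes by applying the Siegel--Brjuno theorem to the germ $E^{\circ 2}$ at $0$: its multiplier is $e^{2\pi\ii\theta}$ with $\theta\in\mathscr{B}$, so the germ is holomorphically linearizable, and the component of the linearization domain containing $0$ is a Siegel disk of rotation number $\theta$ for $E^{\circ 2}$. Consequently $\{0,b\}$ is a $2$-periodic Siegel cycle of $E$, as claimed. The only moderately delicate point in this plan is the surjectivity of $\Psi$; everything else reduces to routine algebra and direct citation.
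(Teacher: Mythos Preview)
Your proof is correct and follows essentially the same route as the paper: verify that $\{0,b\}$ is a $2$-cycle, compute the multiplier $(1+b)e^{-b}$, show this function is surjective so that some $b$ gives multiplier $e^{2\pi\ii\theta}$, observe $b\neq 0$ since $\theta$ is irrational, and invoke Siegel--Brjuno. The only cosmetic difference is in the surjectivity step: you reduce to the Lambert $W$ equation $ve^v=-c/e$, while the paper argues directly that if $(z+1)e^{-z}$ omitted some $\xi$ then $(z+1)e^{-z}-\xi=e^{\chi(z)}$ for an entire $\chi$, and derives a contradiction by comparing how many times each side attains the value $-\xi$.
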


\begin{proof}
Obviously, $E$ has a $2$-cycle $\{0,b\}$. By a direct calculation, we have
\begin{equation}
E'(0)E'(b)=(b+1)e^{-b}.
\end{equation}
Therefore, it is sufficient to prove that the equation $(b+1)e^{-b}=e^{2\pi\ii\theta}$ has at least one solution $b$ in $\C\setminus\{0\}$. We claim that $z\mapsto (z+1)e^{-z}$ is a surjection from the complex plane to itself. Otherwise, there exists $\xi\in\C$  and an entire function $\chi$ satisfying $(z+1)e^{-z}-\xi=e^{\chi(z)}$. The left-hand equation takes $-\xi$ only once, while the right-hand takes $-\xi$ infinitely times. Note that $b\neq 0$ since $\theta$ is irrational. This contradiction concludes the lemma.
\end{proof}

\begin{thm}\label{thm:mero-2}
For any $\theta\in\mathscr{B}$, there exists $a_1\in(0,1]$ such that for any $0<a<a_1$, there exists $b\in\C\setminus\{0\}$ such that
\begin{equation}\label{equ:f-mero}
m_{a,b}(z)=\frac{a-b}{be^b}\,\frac{z^2}{z-a}\,e^z+b
\end{equation}
has a $2$-cycle of Herman rings of rotation number $\theta$.
\end{thm}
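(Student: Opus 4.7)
The plan is to adapt the surgery strategy of Lemma \ref{lem-rat-pre} and of the proof of Theorem \ref{thm:mero-p-FG}, but taking as base dynamics the entire function supplied by Lemma \ref{lem:Siegel-2} rather than $E_\lambda(z)=\lambda z^2 e^z$. By Lemma \ref{lem:Siegel-2}, choose $b^\ast\in\C\setminus\{0\}$ so that $E(z)=-ze^{z-b^\ast}+b^\ast$ has a $2$-cycle of Siegel disks $\{\Delta_1,\Delta_2\}$ of rotation number $\theta$ containing the $2$-periodic Siegel points $0\in\Delta_1$ and $b^\ast\in\Delta_2$. Paste the inverted dynamics $\eta_\varepsilon\circ Q\circ\eta_\varepsilon^{-1}(z)+b^\ast$ of $Q(z)=e^{-2\pi\ii\theta}z+z^2$ into a small neighborhood $U_\varepsilon$ of $0$ inside $\Delta_1$, apply the Euclidean translation $z\mapsto z-b^\ast$ on $U_\varepsilon+b^\ast\subset\Delta_2$, and interpolate by quasiconformal maps $\psi_1,\psi_2$ on the two annuli $A_1',A_2'$ exactly as in the proof of Lemma \ref{lem-rat-pre}. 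This produces a quasiregular $G:\C\to\EC$ carrying an invariant ellipse field of bounded dilatation; the measurable Riemann mapping theorem then yields a quasiconformal $\Phi:\EC\to\EC$, normalized to fix $0$ and $\infty$, such that $m:=\Phi\circ G\circ\Phi^{-1}:\C\to\EC$ is transcendental meromorphic with a $2$-cycle of Herman rings and a super-attracting $2$-cycle $\{0,b\}$, where $b:=\Phi(b^\ast)$.

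Next I will verify that $m$ has the form \eqref{equ:f-mero}. Since $E$ is of order one and the surgery is localized, $m$ is of order one. The only preimage of $\infty$ introduced by the surgery lies inside $U_\varepsilon$ at the unique finite solution $z=-\varepsilon e^{2\pi\ii\theta}$ of $\eta_\varepsilon\circ Q\circ\eta_\varepsilon^{-1}(z)=\infty$, so $m$ has exactly one simple pole $a:=\Phi(-\varepsilon e^{2\pi\ii\theta})\in\C\setminus\{0,b\}$. The asymptotic value $b^\ast$ of $E$ at $\re z\to-\infty$ passes through the surgery untouched, so $m$ has asymptotic value $b$. Tracing preimages of $b^\ast$ under $G$ in each piece of the surgery shows $G^{-1}(b^\ast)=\{0\}$, and combined with $m'(0)=0$ (from the local degree two of $\eta_\varepsilon\circ Q\circ\eta_\varepsilon^{-1}$ at $0$) this forces $m(z)-b$ to be an order-one meromorphic function with a unique simple pole at $a$ and a unique double zero at $0$. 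Hence $m(z)-b=c\,z^2 e^{\alpha z}/(z-a)$ for some $c,\alpha\in\C\setminus\{0\}$. A linear conjugation $w=\alpha z$ normalizes $\alpha=1$, and the $2$-cycle condition $m(b)=0$ then forces $c=(a-b)/(be^b)$, yielding \eqref{equ:f-mero}.

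Applying Theorem \ref{thm:BF-6.1} to the Herman ring of $m$ gives an analytic family $m_\lambda$ parameterized by $\lambda\in\D\setminus\{0\}$ with $\Mod(A_1^\lambda)=\tfrac{1}{2\pi}\log\tfrac{1}{|\lambda|}$. The uniqueness of the pole and the super-attracting $2$-cycle, together with the argument of Lemma \ref{lem-rat-pre}, show that $a_\lambda$ and $b_\lambda=m_\lambda(0)$ depend analytically on $\lambda$, so $m_\lambda=m_{a_\lambda,b_\lambda}$. By the surgery the Herman ring $A_1^\lambda\subset\Phi_\lambda(\Delta_1)$ separates $\{0,a_\lambda\}$ (both in the bounded complementary component $\Phi_\lambda(U_\varepsilon)$) from $\{b_\lambda,\infty\}$ (the former inside $\Phi_\lambda(\Delta_2)$, the latter in the unbounded component); Theorem \ref{thm:BF-6.1}(c) guarantees this separation is preserved through the deformation. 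Since $\Mod(A_1^\lambda)\to\infty$ as $\lambda\to 0$, Teichm\"uller's modulus extremal theorem forces the inner pair $\{0,a_\lambda\}$ to collapse in the spherical metric, so $a_\lambda\to 0$. The removable singularity theorem extends $a_\lambda$ to a holomorphic map on $\D$ with $\widetilde{a}_0=0$, and by openness $\widetilde{a}_\lambda(\D)$ contains a disk around $0$, hence an interval $(0,a_1)$ with $a_1\in(0,1]$. For each $a\in(0,a_1)$ pick $\lambda\in\D\setminus\{0\}$ with $a_\lambda=a$ and set $b:=b_\lambda$; one has $b\neq 0$ because $0$ and $b_\lambda$ are distinct $2$-periodic points.

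The main technical obstacle is maintaining the separation pattern across the deformation: one must verify that the pole $a_\lambda$ remains in the bounded inner complementary component of $A_1^\lambda$, and that $b_\lambda$ and $\infty$ remain in the outer one, uniformly for $\lambda\in\D\setminus\{0\}$. Secondarily, in identifying the form of $m$ it is important to check rigorously that $b$ is the unique asymptotic value and that $m(z)=b$ has only the solution $z=0$, in order to fix the zero/pole divisor of $m-b$ and hence the exponential normalization. Granted these combinatorial checks, Teichm\"uller's theorem and the removable singularity argument yield $a_\lambda\to 0$ at once, and the final openness step is routine.
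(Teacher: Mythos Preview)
Your overall architecture matches the paper's: surgery on $E(z)=-ze^{z-b^\ast}+b^\ast$ using the inverted quadratic $\eta_\varepsilon\circ Q\circ\eta_\varepsilon^{-1}$, identification of the form of $m$ via the pole/zero/asymptotic-value bookkeeping, deformation by Theorem~\ref{thm:BF-6.1}, and then a Teichm\"uller-type argument to force $a_\lambda\to 0$. The obstacle you flag at the end --- preservation of the separation pattern under the deformation --- is not actually the issue; that is automatic from the quasiconformal conjugacy in Theorem~\ref{thm:BF-6.1}(c), as you yourself note earlier.

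The genuine gap is in your Teichm\"uller step. From the fact that $A_1^\lambda$ separates $\{0,a_\lambda\}$ from $\{b_\lambda,\infty\}$ with $\Mod(A_1^\lambda)\to\infty$, Teichm\"uller's extremal theorem only yields $|a_\lambda|/|b_\lambda|\to 0$; it does \emph{not} single out the inner pair as the one that collapses spherically. The alternative $b_\lambda\to\infty$ with $a_\lambda$ bounded away from $0$ is entirely consistent with this conclusion, and you give no argument to exclude it. Since $b_\lambda$ is an independent parameter with no algebraic relation to $a_\lambda$, this case cannot be ruled out using $b_\lambda$ alone. The paper's remedy is to track instead the two non-zero critical points
\[
c_\lambda^\pm=\tfrac{1}{2}\Big(a_\lambda-1\pm\sqrt{1+6a_\lambda+a_\lambda^2}\Big),
\]
one of which lies in $\Int(A_1^\lambda)$ and the other in $\Ext(A_1^\lambda)$. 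These are explicit functions of $a_\lambda$. If the outer component were the one to collapse, then $c_2^\lambda\to\infty$, which forces $a_\lambda\to\infty$ and $c_2^\lambda/a_\lambda\to 1$; but then $A_1^\lambda$ separates $a_\lambda$ (inner) from $c_2^\lambda\sim a_\lambda$ (outer), and Teichm\"uller bounds the modulus --- a contradiction. You should replace $b_\lambda$ by the outer critical point in your separation argument; with that change the rest of your proof goes through as in the paper.
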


See Figure \ref{Fig:Herman-mero} for an example.

\begin{figure}[!htpb]
  \setlength{\unitlength}{1mm}
  \centering
  \includegraphics[width=0.98\textwidth]{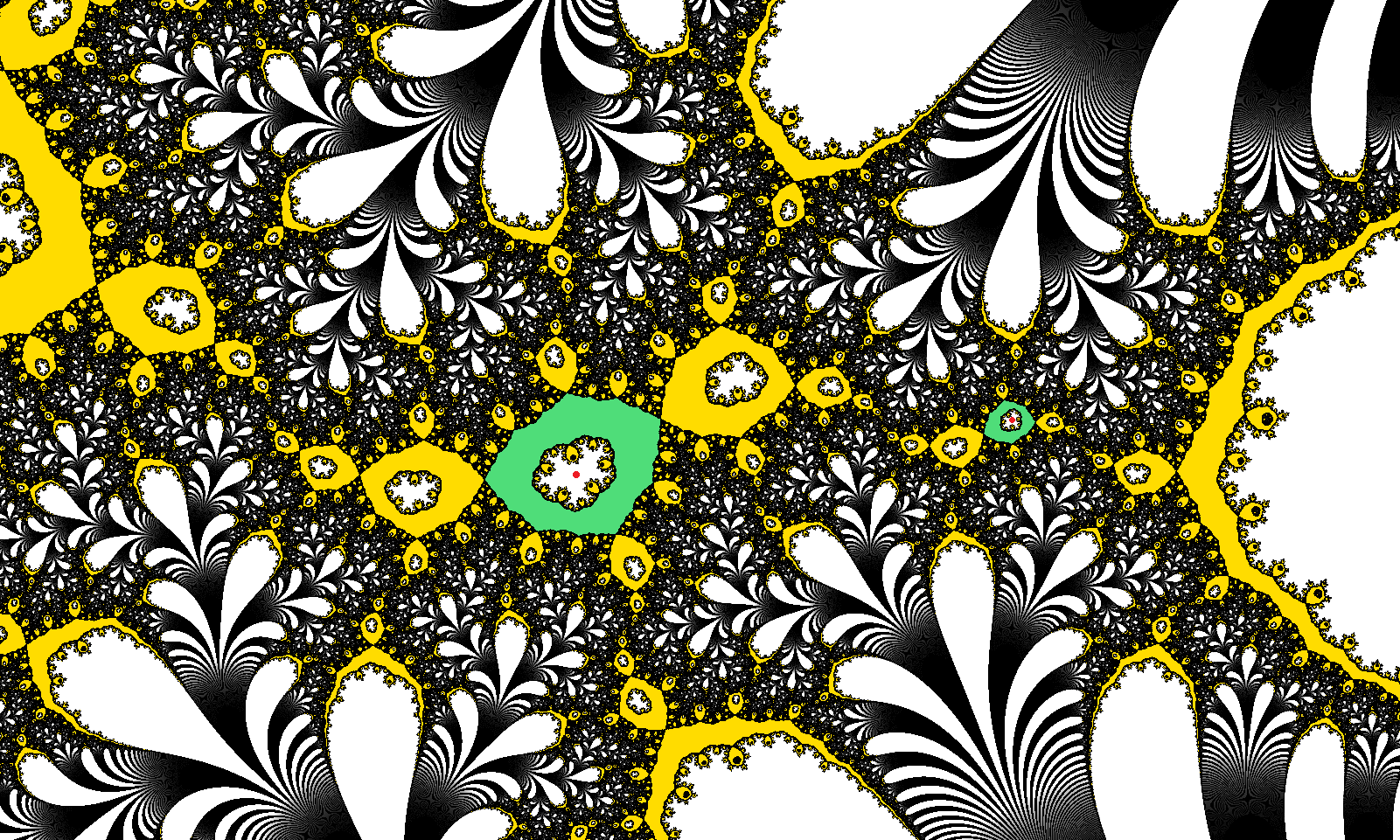}
  \caption{A transcendental meromorphic function $m_{a,b}$ with a $2$-cycle of Herman rings (colored green), where $a=\frac{1}{100}$ and $b\approx-1.23796766-0.16535887\ii$ are chosen such that the rotation number is $\theta=(\sqrt{5}-1)/2$. The Julia set, preimages of the Herman rings and the super-attracting basins, are colored black, dark yellow and white respectively.  Figure range: $[-2.9,1.1]\times[-1.2,1.2]$.}
  \label{Fig:Herman-mero}
\end{figure}

\begin{proof}
For any given $\theta\in\mathscr{B}$, by Lemma \ref{lem:Siegel-2}, let $b\in\C\setminus\{0\}$ be a number such that $E(z)=-z e^{z-b}+b$ has a $2$-cycle of Siegel disks $\{\Delta_1,\Delta_2\}$ of rotation number $\theta$ containing the $2$-periodic point $0$, where $0\in \Delta_1$.
Let $\Delta$ be the Siegel disk of $Q(z)=e^{-2\pi\ii\theta}z+z^2$.

Similar to Lemma \ref{lem-rat-pre}, we paste the exterior of an invariant proper subdisk of $\Delta$ to a small neighborhood of $0$ in $\Delta_1$. Then an Euclidean translation is added in a small neighborhood of $b$. By adopting the same notations as in Lemma \ref{lem-rat-pre} (see also Figure \ref{Fig-surgery}), the pre-quasiregular map $M$ is defined as
\begin{equation}
M(z):=
\left\{
\begin{array}{ll}
E(z)  &~~~~~~~\text{if}~z\in \C\setminus\big(\Int(\gamma_1)\cup\Int(\gamma_2)\big), \\
\eta_\varepsilon\circ Q\circ\eta_\varepsilon^{-1}(z)+b &~~~~~~\text{if}~z\in \overline{U}_\varepsilon, \\
z-b &~~~~~~\text{if}~z\in \overline{U}_\varepsilon+b.
\end{array}
\right.
\end{equation}
After designating two quasiconformal mappings $\psi_1$ and $\psi_2$ in two corresponding annuli $A_1'$ and $A_2'$ respectively such that their composition $\psi=\psi_2\circ\psi_1$ is quasiconformally conjugate to the irrational rotation $R_\theta$, one can obtain a quasiregular map $M:\C\to\EC$, which admits an invariant ellipse field.

There exists a quasiconformal mapping $\Phi:\EC\to\EC$ which fixes $0$, $b$ and $\infty$, such that $m=\Phi\circ M\circ\Phi^{-1}:\C\to\EC$ is a meromorphic function having the following properties:
\begin{itemize}
\item[(i)] $m$ is holomorphic in $\C$ except it has exactly one simple pole $a\in\C\setminus\{0\}$;
\item[(ii)] $m$ has a super-attracting $2$-cycle $\{0,b\}$, where $0$ is a simple critical point, $b$ is the unique asymptotic value and $m^{-1}(b)=\{0\}$;
\item[(iii)] $m$ has exactly $3$ different critical points $0$, $c_1$ and $c_2$; and
\item[(iv)] $m$ has a $2$-cycle of Herman rings $\{A_1,A_2\}$, where $A_1$ separates $\{0,a,c_1\}$ from $\{c_2,b,\infty\}$.
\end{itemize}
By (i), $m$ has the following form
\begin{equation}
m(z)=\frac{1}{z-a}\, m_1(z),
\end{equation}
where $m_1:\C\to\C$ is a transcendental entire function.
By (ii), since $m^{-1}(b)=\{0\}$ and $0$ is a simple critical point of $m$, it follows that $m$ can be written as
\begin{equation}
m(z)=\frac{z^2}{z-a}\, m_2(z)+b,
\end{equation}
where $m_2:\C\to\C$ is a non-constant transcendental entire function which avoids the origin. Hence $m_2$ can be written as
$m_2(z)=e^{\omega(z)}$, where $\omega:\C\to\C$ is a non-constant entire function.
Since we have not modified $E$ around the infinity, the order of $m$ is one and hence $\omega$ is linear. Note that $m$ has a $2$-cycle $\{0,b\}$. By replacing new constants $a$ and $b$ if necessary, it follows that one can write $m$ as
\begin{equation}
m(z)=\frac{a-b}{be^b}\,\frac{z^2}{z-a}\,e^z+b.
\end{equation}

From the proof of Theorem \ref{thm:BF-6.1} (see \cite[\S 6.1]{BF14}), one obtains an analytic family of transcendental meromorphic maps $m_\lambda=\Phi_\lambda\circ m\circ \Phi_\lambda^{-1}$ parameterized by $\D\setminus\{0\}$, where $\Phi_\lambda:\EC\to\EC$ is an analytic family of quasiconformal mappings fixing $0$, $1$ and $\infty$ which is parameterized by $\D\setminus\{0\}$. Let $a_\lambda=\Phi_\lambda(a)$ be the unique pole of $m_\lambda$ in $\C$ and denote $b_\lambda=f_\lambda(0)$. Since $m_\lambda$ has a super-attracting $2$-cycle $\{0,b_\lambda\}$, it follows that $m_\lambda$ can be written as
\begin{equation}
m_\lambda(z)=\frac{a_\lambda-b_\lambda}{b_\lambda e^{b_\lambda}}\frac{z^2}{z-a_\lambda}\,e^z+b_\lambda.
\end{equation}
Moreover, $m_\lambda$ has the following properties for all $\lambda\in\D\setminus\{0\}$:
\begin{itemize}
\item $m_\lambda$ has a $2$-cycle of Herman rings $\{A_1^\lambda, A_2^\lambda\}$;
\item $\Mod(A_1^\lambda)=\frac{1}{2\pi}\log\frac{1}{|\lambda|}$; and
\item $A_1^\lambda$ separates $\{0,a_\lambda,c_1^\lambda\}$ from $\{c_2^\lambda,\infty\}$, where $c_1^\lambda$ and $c_2^\lambda$ are the critical points of $m_\lambda$ having the form:
\begin{equation}
\big\{c_1^\lambda,c_2^\lambda\big\}=\left\{c_\lambda^\pm=\frac{a_\lambda-1\pm\sqrt{1+6a_\lambda+a_\lambda^2}}{2}\right\},
\end{equation}
where we adopt the convention that $\arg\sqrt{z}\in(-\frac{\pi}{2},\frac{\pi}{2}]$ for $z\in\C\setminus\{0\}$.
\end{itemize}

Since $c_1^\lambda\neq c_2^\lambda$, we have $a_\lambda\in\C\setminus\{0,-3\pm 2\sqrt{2}\}$. Hence $a_\lambda=\Phi_\lambda(a):\D\setminus\{0\}\to\C\setminus\{0,-3\pm 2\sqrt{2}\}$ is holomorphic.
By Lemma \ref{lem-rat-pre}(b), the modulus of $A_1^\lambda$ satisfies $\Mod(A_1^\lambda)\to\infty$ as $\lambda\to 0$.
According to Teichm\"{u}ller's modulus extremal theorem \cite[Theorem 4.7, p.\,72]{Ahl10}, as $\lambda\to 0$ we have either $a_\lambda\to 0$, or $a_\lambda\not\to 0$ and $c_2^\lambda \to \infty$. We claim that the second case cannot happen. Indeed, if $c_2^\lambda \to \infty$, then $c_2^\lambda=c_\lambda^+$ and $a_\lambda\to\infty$. Since $c_2^\lambda/a_\lambda=c_\lambda^+/a_\lambda\to 1$ as $a_\lambda\to\infty$, this implies that $\Mod(A_1^\lambda)\not\to\infty$, which is a contradiction. Therefore, we have
$$\lim_{\lambda\to 0}a_\lambda=0.$$
Then $a_\lambda$ can be extended to a holomorphic map $\widetilde{a}_\lambda:\D\to\C\setminus\{-3\pm 2\sqrt{2}\}$ with $\widetilde{a}_\lambda(0)=0$ by removable singularity theorem.
By the openness of $\widetilde{a}_\lambda$, there exists a number $a_1>0$ such that $\widetilde{a}_\lambda(\D)$ contains the segment $[0,a_1)$. In particular, $a_\lambda(\D)$ contains $(0,a_1)$. This finishes the proof.
\end{proof}

An explicit formula of meromorphic functions with exactly one pole having a fixed Herman ring was given in \cite[Proposition 3.4]{DF04} (see also \cite[Proposition 7.19, p.\,240]{BF14}). In particular, the formula is $z\mapsto u z^2 e^z/(z-a)$, where $a$, $u\in\C\setminus\{0\}$. The pre-model before the surgery is $z\mapsto \lambda z e^z$.

If one could prove that for all $\theta\in\mathscr{B}$ and $p\geq 3$, there exist $\kappa$ and $b$ such that $E_{\kappa,b}(z)=\kappa z e^z+b$ contains a $p$-cycle of Siegel disks of rotation number $\theta$ containing the $p$-periodic point $0$, then the statement of Theorem \ref{thm:mero-2} (with $u=(a-b)/(be^b)$ being replaced by $u=u(a,b)$) can be generalized to all $p\geq 1$.

\section{Nested Herman rings}\label{sec:Nested-HR}

The existence of rational maps with a $2$-cycle of nested Herman rings has been proved in \cite[Theorem 5]{Shi87}. The main purpose in this section is to find their formulas.

\begin{proof}[Proof of Theorem \ref{thm:period-2-inverse}]
By the construction of Part (B) in \cite[\S9]{Shi87}, the resulting rational map $h$ after quasiconformal surgery has the following properties:
\begin{itemize}
\item[(i)] $h$ is a rational map of degree $4$ satisfying $h=\tau\circ h\circ \tau^{-1}$, where $\tau(z)=1/\overline{z}$;
\item[(ii)] $h$ has a super-attracting cycle:
\begin{equation}
a\mapsto 0 \mapsto \tau(a)\mapsto \infty\mapsto a,
\end{equation}
where $|a|>1$ and the local degree of $a$ is $3$; and
\item[(iii)] $h$ has a $2$-cycle of Herman rings $\{A_1,\tau(A_1)\}$, where $A_1\subset \Int (\tau(A_1))$.
\end{itemize}
From (i), it follows that $h$ is a Blaschke product of degree $4$ by \cite[Lemma 15.5]{Mil06}. Up to a rotation, we assume that $a>1$. By (ii), $h$ must have the form
\begin{equation}
h(z)=e^{2\pi\ii t}\left(\frac{z-a}{1-a z}\right)^3\,\frac{z-b}{1-\overline{b} z},
\end{equation}
where $0<|b|<1$ and $t\in\R$. From the condition $h(0)=1/a$, we have $b=1/(a^2e^{2\pi\ii t})$.
The proof is finished if we set $r:=1/a$.
\end{proof}

It is much harder to find the formula of a rational map such that it has a $3$-cycle of nested Herman rings $A_1$, $A_2$, $A_3$ which satisfies $A_2\subset \Int (A_1)$ and $A_1\subset \Int (A_3)$. Such examples exist in degree $5$ \cite{Shi02}.

\section{More examples of periodic Herman rings}

In this section we give more explicit examples of rational maps having periodic Herman rings.

\subsection{Herman rings of interlaced case}

In the proof of Theorem \ref{thm:rat-p}, we obtained a $p$-cycle of Herman rings such that the two critical points are associated to exactly one Herman ring. Specifically, suppose that $\theta\in\mathscr{B}$ is chosen such that the boundaries of the Siegel disks $\Delta_1$ and $\Delta$ contain the critical points. Then the Herman rings constructed in \S\ref{sec:non-nested-HR} satisfy that each of the components of $\partial A_1$ contain exactly one critical points while the boundaries of other periodic Herman rings do not contain any critical points.

However, the position of the critical points can be arranged in a crisscross pattern by performing the surgery. For example, for $p\geq 2$, let $P(z)=z^2+\kappa_1 z+\kappa_0$ be a quadratic polynomial such that
\begin{itemize}
\item $P$ has a $p$-cycle of Siegel disks $\{\Delta_i:1\leq i\leq p\}$ of rotation number $\theta$, where $P(\Delta_i)=\Delta_{i+1}$ for $1\leq i\leq p-1$ and $\Delta_1=P(\Delta_p)$;
\item $\Delta_1$ contains the $p$-periodic point $0$; and
\item $\partial\Delta_2$ (or $\partial\Delta_i$ for $2\leq i\leq p$) contains the finite critical point of $P$.
\end{itemize}
Let $\Delta$ be the Siegel disk of $Q(z)=e^{-2\pi\ii\theta}z+z^2$ centered at $0$. As a comparison to the construction in Lemma \ref{lem-rat-pre}, we also paste the exterior of the invariant proper subdisk of $\Delta$ to a small neighborhood of $0$ in $\Delta_1$. Similarly, one can obtain a cubic rational map $f_{a,b}$ (defined in \eqref{equ:f-rat}) having a $p$-cycle of Herman rings $\{A_i:1\leq i\leq p\}$ of rotation number $\theta$, where the internal boundary of $A_1$ and the external boundary of $A_2$, each contains a critical point. We call such a cycle of Herman rings \textit{interlaced}.

An numerical experiment shows that one can choose
\begin{equation}
a=80 \text{\quad and\quad} b\approx46.47151539+3.87122727\ii,
\end{equation}
such that $f_{a,b}$ has a $2$-cycle of interlaced Herman rings. The picture of the corresponding Julia set is very similar to Figure \ref{Fig:Herman}.

\subsection{Herman rings of period $3$}

Theorem \ref{thm:rat-p} guarantees the existence of $a$ and $b$ such that $f_{a,b}$ has a $p$-cycle of Herman rings for any period $p\geq 2$. In particular, for $p=3$, the number $u=u(a,b)$ is determined by the orbit
\begin{equation}
0\mapsto b\mapsto f_{a,b}(b)\mapsto 0,
\end{equation}
A direct calculation shows that $u=u(a,b)$ is the solution of a polynomial with degree $4$ whose coefficients are polynomials of $a$ and $b$.
The solutions cannot be written explicitly, unlike the case $p=2$. However, we can still obtain a picture by numerical experiment (see Figure \ref{Fig:Herman-p=3}).

\begin{figure}[!htpb]
  \setlength{\unitlength}{1mm}
  \includegraphics[width=0.95\textwidth]{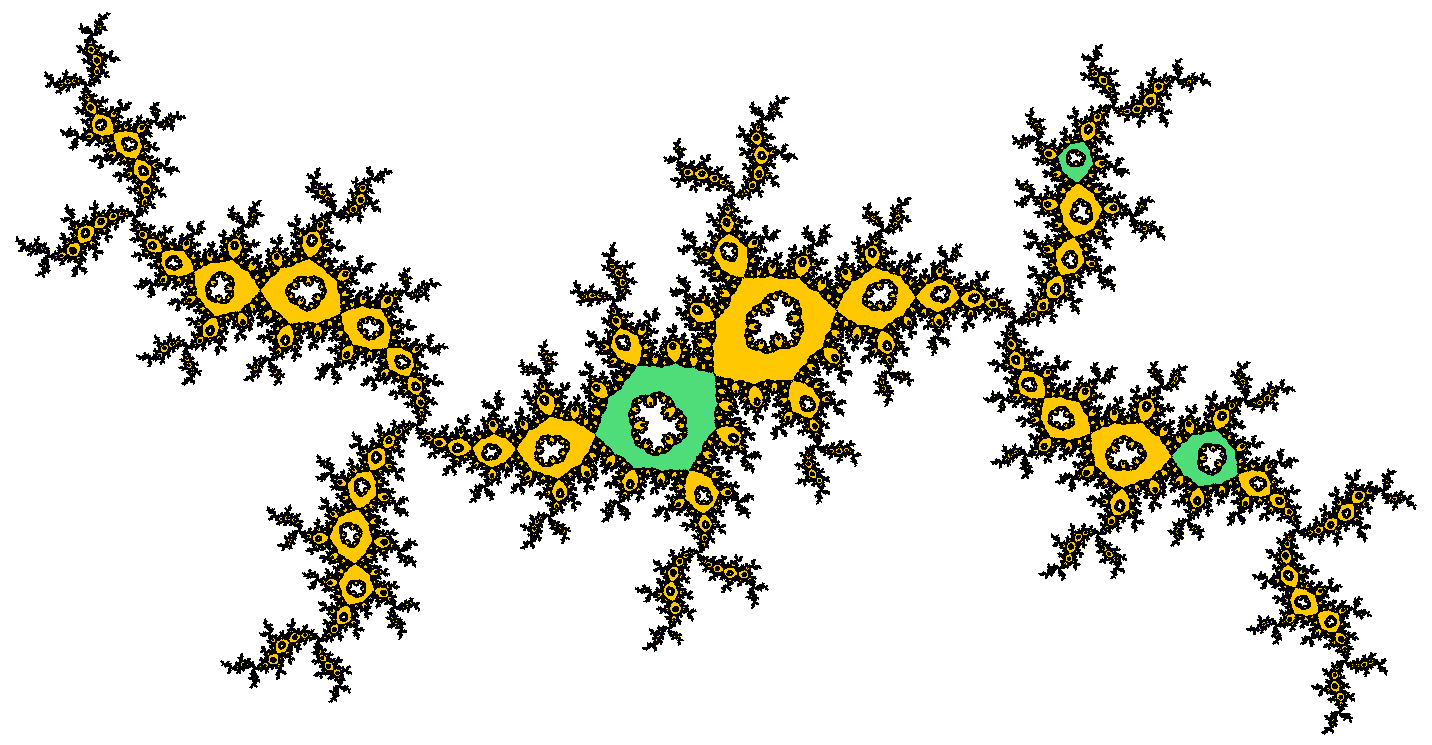}
  \caption{The cubic rational map $f_{a,b}$ with a $3$-cycle of Herman rings (colored green), where $a=4$, $b\approx 12.21173310-1.67440929\ii$ and $u\approx -0.01828231+0.31119224\ii$ satisfying $f_{a,b}^{\circ 3}(0)=0$ are chosen such that the rotation number is $\theta=(\sqrt{5}-1)/2$ (this picture has been rotated). Compare Figure \ref{Fig:Herman}.}
  \label{Fig:Herman-p=3}
\end{figure}

\section{Quadratics in a transcendental family}\label{sec:Siegel}


\subsection{Polynomial-like mappings and Mandelbrot-like set}

We first recall the definition of polynomial-like mappings and state the related theory, which are due to Douady and Hubbard \cite{DH85b}.

\begin{defi}
A triple $(f; U, V)$ is called a \emph{polynomial-like mapping} of degree $d\geq 2$, if $U$ and $V$ are simply connected domains in $\C$ such that $\overline{U}\subset V$, and $f: U\rightarrow V$ is a holomorphic proper mapping of degree $d$. The \emph{filled-in Julia set} $K(f)$ of a polynomial-like mapping $(f;U,V)$ is defined as
$$
K(f)=\{z\in U: f^{\circ n}(z)\in U \text{~for all~}  n\geq 0\}.
$$
The \emph{Julia set} $J(f)$ of $(f;U,V)$ is defined as the boundary of $K(f)$.
In particular, a polynomial-like mapping of degree $2$ is called \textit{quadratic-like}.
\end{defi}

Let $\Lambda$ be a connected complex manifold. Suppose that $\{(f_\lambda;U_\lambda,V_\lambda)\}_{\lambda\in\Lambda}$ is a family of polynomial-like mappings parameterized by $\lambda\in\Lambda$, which satisfies
\begin{itemize}
\item The boundaries of $U_\lambda$ and $V_\lambda$ vary continuously as $\lambda\in\Lambda$ varies; and
\item The map $(\lambda,z)\mapsto f_\lambda(z)$ depends holomorphically on both $\lambda$ and $z$.
\end{itemize}
Then we say that $\{(f_\lambda;U_\lambda,V_\lambda):\lambda\in\Lambda\}$ is a \textit{holomorphic} family of polynomial-like mappings.

The following theorem is the major result in \cite{DH85b}.

\begin{thm}[{Douday-Hubbard}]\label{thm:DH}
Suppose that $\{(f_\lambda;U_\lambda,V_\lambda)\}_{\lambda\in\Lambda}$ is a holomorphic family of quadratic-like mappings. Assume that there exists a closed disk $\mathcal{D}$ contained in $\Lambda$ such that:
\begin{enumerate}
\item For the unique critical point $\omega_{\lambda}$ in $U_{\lambda}$, $f_{\lambda}(\omega_{\lambda})-\omega_{\lambda}$ turns around $0$ once as $\lambda$ turns around $\partial \mathcal{D}$ once; and
\item The critical value $f_\lambda(\omega_\lambda)$ belongs to $V_\lambda\setminus U_\lambda$ for every $\lambda\in\partial\mathcal{D}$.
\end{enumerate}

Then, there exists a subset $\mathcal{M} \subset \mathcal{D}$ which is homeomorphic to the standard Mandelbrot set $\mathbb{M}$ via a map $\lambda\mapsto c(\lambda)$, where $\mathcal{M}$ is given by
\begin{equation}
\mathcal{M}=\left\{\lambda \in \mathcal{D} \mid f_{\lambda}^{\circ n}(\omega_{\lambda}) \in U_{\lambda} \text{ for all } n \geq 0 \right\}.
\end{equation}
Moreover, for each $\lambda \in \mathcal{M}$,  $f_{\lambda}|_{U_{\lambda}}$ is topologically conjugate to $z \mapsto z^{2}+c(\lambda)$
on their corresponding filled-in Julia sets.
\end{thm}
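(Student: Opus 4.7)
The crux is the straightening theorem for quadratic-like maps, which says that any quadratic-like $(f;U,V)$ is hybrid equivalent to a unique polynomial $P_c(z)=z^2+c$; that is, there is a quasiconformal homeomorphism $\phi:V\to\phi(V)\subset\C$ with $\overline{\partial}\phi=0$ a.e.\ on $K(f)$ conjugating $f$ to $P_c$ near $K(f)$. My plan is to apply this to every $(f_\lambda;U_\lambda,V_\lambda)$ to obtain a map $\chi:\mathcal{D}\to\C$, $\chi(\lambda)=c(\lambda)$, and then establish (i) $\chi$ is continuous on $\mathcal{D}$, (ii) $\chi(\partial\mathcal{D})\subset\C\setminus\mathbb{M}$, and (iii) $\chi|_{\partial\mathcal{D}}$ winds exactly once around $\mathbb{M}$. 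Once these are in hand, $\mathcal{M}:=\chi^{-1}(\mathbb{M})\cap\mathcal{D}$ is mapped homeomorphically onto $\mathbb{M}$ by $\chi$, and the topological conjugacy on filled-in Julia sets is precisely the hybrid conjugacy restricted to $K(f_\lambda)$.

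\textbf{Straightening with parameters.} For each fixed $\lambda$, I would execute the straightening by gluing $z\mapsto z^2$ on the exterior of a large round disk in $\C$ to $f_\lambda$ on $U_\lambda$ via a quasiconformal interpolation across the fundamental annulus $V_\lambda\setminus\overline{U}_\lambda$, producing a quasiregular degree-two map $F_\lambda:\EC\to\EC$. Pulling back the standard structure by iterates of $F_\lambda$ yields an $F_\lambda$-invariant Beltrami form $\mu_\lambda$ of uniformly bounded dilatation, trivial on $K(f_\lambda)$ and outside the grand orbit of the interpolation annulus. Because $\partial U_\lambda$ and $\partial V_\lambda$ vary continuously and $(\lambda,z)\mapsto f_\lambda(z)$ is holomorphic, the construction can be arranged so that $\mu_\lambda$ depends continuously on $\lambda$ in $L^\infty$; the parametric measurable Riemann mapping theorem (Ahlfors--Bers) then gives integrating homeomorphisms $\Phi_\lambda:\EC\to\EC$, suitably normalized (fixing $\infty$, sending $\omega_\lambda$ to $0$ and fixing the critical value's centering), depending continuously on $\lambda$. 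The resulting polynomial $\Phi_\lambda\circ F_\lambda\circ\Phi_\lambda^{-1}$ is then a monic centered quadratic $z\mapsto z^2+c(\lambda)$, so $\chi$ is continuous on $\mathcal{D}$.

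\textbf{Winding and conclusion.} For $\lambda\in\partial\mathcal{D}$, condition (b) forces $f_\lambda(\omega_\lambda)\in V_\lambda\setminus U_\lambda$, so $\omega_\lambda\notin K(f_\lambda)$, which by straightening is equivalent to $c(\lambda)\notin\mathbb{M}$. Hence $\chi(\partial\mathcal{D})\subset\C\setminus\mathbb{M}$. To compute the winding I would transport to Douady--Hubbard B\"ottcher coordinates $\Psi:\C\setminus\mathbb{M}\to\C\setminus\OD$: for a once-escaping critical orbit, $\Psi(c(\lambda))$ is read off from the position of the critical value in the uniformization of the fundamental annulus $V_\lambda\setminus\overline{U}_\lambda$. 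Since this annulus moves holomorphically with $\lambda$, condition (a) --- that $f_\lambda(\omega_\lambda)-\omega_\lambda$ winds once around $0$ as $\lambda$ traces $\partial\mathcal{D}$ --- translates directly into $\Psi\circ\chi|_{\partial\mathcal{D}}$ having winding number $1$ around $\OD$. By degree theory, $\chi$ covers every $c\in\mathbb{M}$; injectivity on $\mathcal{M}$ follows from the uniqueness part of the straightening theorem combined with the degree-one property, and a continuous bijection between the compact sets $\mathcal{M}$ and $\mathbb{M}$ is a homeomorphism.

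\textbf{Main obstacle.} The delicate points are the parameter continuity (not holomorphy) of the straightening, and the precise identification of the winding of $\chi(\lambda)$ in B\"ottcher coordinates with the winding of $f_\lambda(\omega_\lambda)-\omega_\lambda$ through the holomorphically moving annulus $V_\lambda\setminus\overline{U}_\lambda$. Both are standard but technical; once they are secured, surjectivity onto $\mathbb{M}$ from degree theory and injectivity from hybrid-equivalence rigidity close the argument.
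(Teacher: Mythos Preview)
The paper does not give a proof of this theorem: it is quoted as ``the major result in \cite{DH85b}'' and used as a black box. Your outline is essentially the original Douady--Hubbard argument, so there is nothing to compare against here.

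One correction to your sketch: the sentence ``injectivity on $\mathcal{M}$ follows from the uniqueness part of the straightening theorem'' is not right as stated. Uniqueness of straightening says that if $c(\lambda_1)=c(\lambda_2)$ with $\lambda_1,\lambda_2\in\mathcal{M}$, then $f_{\lambda_1}$ and $f_{\lambda_2}$ are hybrid equivalent to the same $P_c$ and hence to each other; but that by itself does not force $\lambda_1=\lambda_2$ (two distinct parameters in a family can perfectly well give hybrid-equivalent maps). In Douady--Hubbard's actual argument, injectivity is obtained topologically: one shows that $\chi$ is \emph{holomorphic} and proper of degree one on $\mathcal{D}\setminus\mathcal{M}$ (this is where the B\"ottcher coordinate and the winding hypothesis (a) do their work), hence a conformal isomorphism onto its image in $\C\setminus\mathbb{M}$; continuity across $\partial\mathcal{M}$ then upgrades this to a homeomorphism $\mathcal{M}\to\mathbb{M}$. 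So the degree-one property is what carries the injectivity, not hybrid rigidity alone.
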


This theorem has many significant applications. In particular, it gives a rigorous mathematical explanation of the appearance of baby Mandelbrot sets in the parameter spaces of various one-dimensional analytic families. In this section, we focus on the following family:
\begin{equation}
E_\lambda(z)=\lambda z^2 e^z, \text{\quad where }\lambda\in\C\setminus\{0\}.
\end{equation}
Note that $E_\lambda$ has an asymptotic value $0$, which is fixed by the super-attracting fixed point $0$ itself. There exists another simple critical point $-2$ which determines the dynamics of $E_\lambda$ essentially. See Figure \ref{Fig:Parameter-FG} for the parameter plane of $E_\lambda$.

\begin{figure}[!htpb]
  \setlength{\unitlength}{1mm}
  \includegraphics[width=0.95\textwidth]{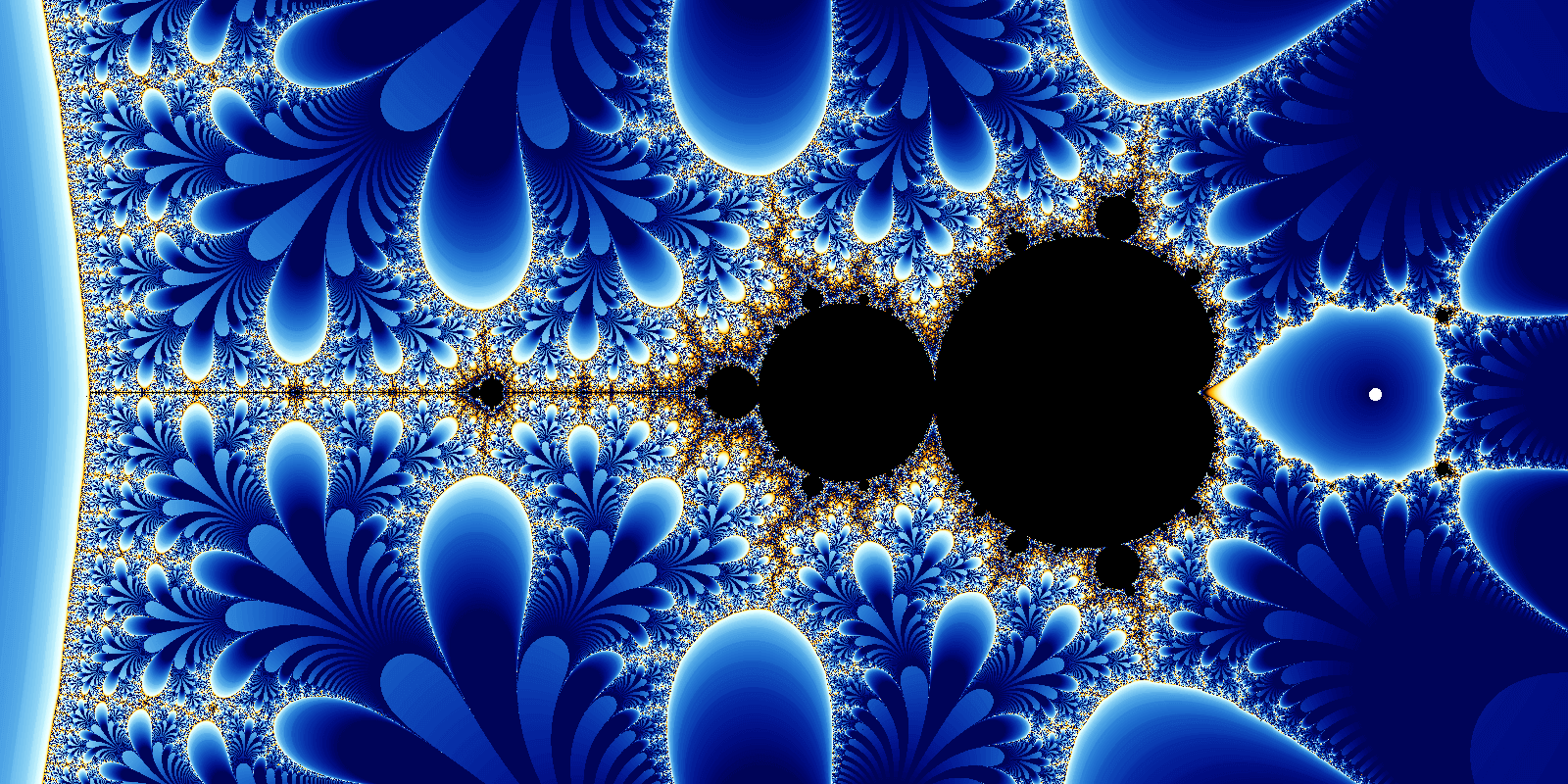}
  \put(-13,29){{\color{white}$0$}}
  \caption{The parameter plane of $E_\lambda(z)=\lambda z^2 e^z$. One can observe a big Mandelbrot-like set in the middle. Figure range: $[-21,3]\times[-6,6]$.  }
  \label{Fig:Parameter-FG}
\end{figure}

\begin{thm}\label{thm:Mandel-FG}
Define
\begin{equation}\label{equ:Lambda}
\begin{split}
\Lambda:=&~\big\{\lambda\in\C:\,\tfrac{2}{5}< |\lambda|<51\big\}\setminus\R^+, \text{\quad and} \\
\mathcal{D}:=&~\big\{\lambda\in\C:\,\tfrac{1}{2}\leq |\lambda|\leq 50 \text{ and } \tfrac{1}{10}\leq \arg\lambda\leq 2\pi-\tfrac{1}{10}\big\}.
\end{split}
\end{equation}
Then we have a holomorphic family of quadratic-like mappings $\{(E_\lambda;U_\lambda,V_\lambda)\}_{\lambda\in\Lambda}$ which satisfies
\begin{enumerate}
\item $U_{\lambda}$ contains exactly one critical point $-2$, and $E_{\lambda}(-2)+2$ turns around zero once as $\lambda$ turns around $\partial \mathcal{D}$ once; and
\item $E_\lambda(-2)\in V_\lambda\setminus U_\lambda$ for every $\lambda\in\partial\mathcal{D}$.
\end{enumerate}
In particular, there exists a Mandelbrot-like set in the interior of $\mathcal{D}$ such that the main cardioid hyperbolic component has period one.
\end{thm}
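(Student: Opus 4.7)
The plan is to apply Theorem~\ref{thm:DH} (Douady-Hubbard) after constructing a suitable holomorphic family of quadratic-like restrictions of $E_\lambda$ near its free critical point $-2$. Since $E_\lambda'(z)=\lambda z(z+2)e^z$, the critical points of $E_\lambda$ are $0$ (the omitted asymptotic value, which is also a super-attracting fixed point) and $-2$, with free critical value $E_\lambda(-2)=4\lambda/e^2$. The natural center of the putative main cardioid is $\lambda_0=-e^2/2$: at this parameter $E_{\lambda_0}(-2)=-2$, making $-2$ a super-attracting fixed point.

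\emph{Construction of $U_\lambda$ and $V_\lambda$.} For each $\lambda\in\Lambda$, I would pick a simply connected Jordan domain $V_\lambda$, depending holomorphically on $\lambda$, that contains both $-2$ and the critical value $4\lambda/e^2$ but is arranged so as not to encircle the second critical point $0$ (for instance, a disk in a suitably chosen coordinate, or a Jordan domain tailored to the orbit of $-2$). Then define $U_\lambda$ as the connected component of $E_\lambda^{-1}(V_\lambda)$ containing $-2$, and verify $\overline{U}_\lambda\subset V_\lambda$, $0\notin U_\lambda$ (so that $-2$ is the unique critical point in $U_\lambda$), and that $E_\lambda\colon U_\lambda\to V_\lambda$ is proper of degree $2$. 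Some care is required because $E_\lambda$ is transcendental: preimages of bounded sets are unbounded and contain asymptotic tracts approaching $-\infty$ over the asymptotic value $0$, and one must show the component through $-2$ is compactly contained in $V_\lambda$ and disjoint from those tracts.

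\emph{Verification of the Douady-Hubbard conditions.} The winding condition is essentially automatic: $E_\lambda(-2)-(-2)=4\lambda/e^2+2$ is affine in $\lambda$ with sole zero $\lambda_0=-e^2/2$, which lies in the interior of $\mathcal{D}$ (since $|\lambda_0|=e^2/2\approx 3.69\in(1/2,50)$ and $\arg\lambda_0=\pi\in(1/10,2\pi-1/10)$); hence $E_\lambda(-2)+2$ winds exactly once around $0$ as $\lambda$ traverses $\partial\mathcal{D}$. The escape condition $E_\lambda(-2)\in V_\lambda\setminus U_\lambda$ for $\lambda\in\partial\mathcal{D}$ is the substantive part: on each of the four boundary pieces---the circles $|\lambda|=1/2$, $|\lambda|=50$ and the two radial segments $\arg\lambda\in\{1/10,\,2\pi-1/10\}$---one must verify that the critical value lies outside the pullback $U_\lambda$. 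This will be done by quantitative estimates: near $-2$ the map is approximately the quadratic $\zeta\mapsto -(\lambda/e^2)\zeta^2$, so the diameter of $U_\lambda$ relative to $V_\lambda$ is controlled by $|\lambda|$ in a tractable way.

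Theorem~\ref{thm:DH} then yields a Mandelbrot-like set $\mathcal{M}\subset\mathcal{D}$. At $\lambda_0=-e^2/2$, the quadratic-like map $E_{\lambda_0}|_{U_{\lambda_0}}$ has its critical point $-2$ as a super-attracting fixed point, hence it is hybrid equivalent to $z\mapsto z^2$; under the homeomorphism $\mathcal{M}\cong\mathbb{M}$ this corresponds to $c(\lambda_0)=0$, identifying $\lambda_0$ with the center of the main cardioid, which is therefore the period-one hyperbolic component. The principal technical obstacle is the explicit construction of $V_\lambda$ meeting all the geometric requirements---compact containment of $U_\lambda$, avoidance of the critical point $0$, and the escape condition on every part of $\partial\mathcal{D}$---simultaneously throughout the parameter region $\Lambda$.
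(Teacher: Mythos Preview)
Your overall strategy is right, and your observations about the winding number and the identification of $\lambda_0=-e^2/2$ as the center of the main cardioid are correct. But the proposal stops exactly where the actual work begins: you explicitly flag the construction of $V_\lambda$ as ``the principal technical obstacle'' and do not carry it out. What you suggest in its place---a Jordan disk containing both $-2$ and $4\lambda/e^2$, controlled by the local quadratic approximation at $-2$---will not do the job over the stated parameter range. For $|\lambda|$ near $50$ the critical value has modulus close to $27$, so any $V_\lambda$ containing both $-2$ and $4\lambda/e^2$ has diameter of order $30$; on such a scale the Taylor approximation $E_\lambda(-2+\zeta)\approx 4\lambda/e^2-(3\lambda/e^2)\zeta^2$ gives no useful control of $U_\lambda$, and you still have to rule out that the component of $E_\lambda^{-1}(V_\lambda)$ through $-2$ meets the tracts over $0$ or picks up extra sheets.

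The paper resolves this with a construction you do not anticipate. It takes $V_\lambda$ to be a \emph{slit annulus}
\[
V_\lambda=\Big\{\tfrac{1}{16|\lambda|}<|z|<30\Big\}\setminus\R^+,
\]
not a disk, and first proves a structural lemma: the full preimage $E_\lambda^{-1}(\R^+)$ decomposes into explicit real-analytic curves $\sigma_k^\lambda$ lying in horizontal strips, and the region $\Omega_0^\lambda$ they cut out around $-2$ is mapped by $E_\lambda$ onto $\C\setminus[0,+\infty)$ as a proper degree-$2$ cover. Setting $U_\lambda=E_\lambda^{-1}(V_\lambda)\cap\Omega_0^\lambda$ then makes the degree-$2$ property automatic and reduces $\overline{U}_\lambda\subset V_\lambda$ and the escape condition on $\partial\mathcal{D}$ to elementary inequalities on $|E_\lambda(z)|$. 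The slit along $\R^+$ is what lets the argument be global rather than perturbative; your local-approximation idea does not supply a substitute for this decomposition.
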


\begin{proof}[Proof of Proposition \ref{prop:Siegel-p} assuming Theorem \ref{thm:Mandel-FG}]
By Theorems \ref{thm:DH} and \ref{thm:Mandel-FG},  there exists a homeomorphism $\chi:\lambda\mapsto c=c(\lambda)$
between a subset $\mathcal{M}$ of $\mathcal{D}$ and the Mandelbrot set $\mathbb{M}$ such that, for every $\lambda \in \mathcal{M}$, the map $E_{\lambda}:U_{\lambda}\to V_\lambda$ is topologically conjugate to the quadratic polynomial $P_c(z)=z^{2}+c$ on their corresponding filled-in Julia sets.

For any given Brjuno number $\theta$ and any integer $p\geq 1$, there exists $c_0\in\mathbb{M}$ such that $P_{c_0}$ contains a $p$-cycle of Siegel disks with rotation number $\theta$. This implies that $E_{\chi^{-1}(c_0)}$ contains a $p$-cycle of Siegel disks with rotation number $\theta$.
\end{proof}

\subsection{Proof of Theorem \ref{thm:Mandel-FG}}


In the following, the argument function $\arg z$ is assumed to take values in $[0,2\pi)$ for any $z\in\C\setminus\{ 0\}$. We first give a decomposition of dynamical plane of $E_\lambda=\lambda z^2 e^z$ for all $\lambda\in\C\setminus[0,+\infty)$. Note that we have $\arg \lambda\in(0,2\pi)$.

Consider the preimages of positive real line $\R^+=(0,+\infty)$ under $E_\lambda(z)$ and define
\begin{equation}
\sigma_k^\lambda:=\{z\in\C:\im z = (2 k+2)\pi-\arg\lambda-\arg (z^2)\},
\end{equation}
where $k\in\Z$. Then we have $E_\lambda^{-1}(\R^+)=\bigcup_{k\in\Z}\sigma_k^\lambda$.
Obviously $\sigma_k^\lambda\cap\R=\emptyset$ for any $k\in\Z$. Define open horizontal strips:
\begin{equation}
S_k^\lambda:=\{z\in\C:2k\pi-\arg\lambda<\im z<(2k+2)\pi-\arg\lambda\},
\end{equation}
where $k\in\Z$. Let $\BH^{\pm}=\{z\in\C:\pm\,\im z>0\}$ be the upper and lower half planes respectively.

\begin{lem}\label{lem:S_k}
For all $k\in\Z$, $\sigma_k^\lambda\subset S_k^\lambda$. In particular,
\begin{enumerate}
\item If $k\geq 1$, then $\sigma_k^\lambda=\{x+\sigma_k^\lambda(x)\ii:x\in\R\}$, where $\sigma_k^\lambda(x)$ is real-analytic, strictly increasing and satisfies
\begin{equation}
\lim_{x\to-\infty}\sigma_k^\lambda(x)=2k\pi-\arg\lambda \text{\quad and\quad} \lim_{x\to+\infty}\sigma_k^\lambda(x)=(2k+2)\pi-\arg\lambda.
\end{equation}
\item If $k\leq -1$, then $\sigma_k^\lambda=\{x+\sigma_k^\lambda(x)\ii:x\in\R\}$, where $\sigma_k^\lambda(x)$ is real-analytic, strictly decreasing and satisfies
\begin{equation}
\lim_{x\to-\infty}\sigma_k^\lambda(x)=(2k+2)\pi-\arg\lambda \text{\quad and\quad} \lim_{x\to+\infty}\sigma_k^\lambda(x)= 2k\pi-\arg\lambda.
\end{equation}
\item If $k=0$, then $\sigma_0^\lambda=\sigma_{0,+}^\lambda\cup\sigma_{0,-}^\lambda$, where $\sigma_{0,\pm}^\lambda$ are real-analytic curves in $S_0^\lambda\cap\BH^{\pm}$ respectively. Moreover,  one end point of $\sigma_{0,\pm}^\lambda$ is $0$,  $\sigma_{0,\pm}^\lambda\cap\{z\in\C:\re z>0\}=\{x+\sigma_{0,\pm}^\lambda(x)\ii:x>0\}$, $\sigma_{0,+}^\lambda(x)$ and $\sigma_{0,-}^\lambda(x)$ are strictly increasing and decreasing in $(0,+\infty)$ respectively, and
\begin{equation}
\lim_{x\to+\infty}\sigma_{0,+}^\lambda(x)=2\pi-\arg\lambda \text{\quad and\quad} \lim_{x\to+\infty}\sigma_{0,-}^\lambda(x)=-\arg\lambda.
\end{equation}
\end{enumerate}
\end{lem}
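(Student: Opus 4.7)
The proof proceeds by direct analysis of the defining equation $\im z=(2k+2)\pi-\arg\lambda-\arg(z^2)$. Since $\arg(z^2)\in[0,2\pi)$, this equation immediately forces $\im z\in(2k\pi-\arg\lambda,(2k+2)\pi-\arg\lambda]$; the upper endpoint would require $\arg(z^2)=0$, i.e., $z\in\R$, so $\im z=0$, which is incompatible with $\arg\lambda\in(0,2\pi)$. Hence $\sigma_k^\lambda\subset S_k^\lambda$. Moreover, one reads off that $\im z>0$ whenever $k\geq1$, $\im z<0$ whenever $k\leq-1$, and $\im z$ may take either sign when $k=0$; in the last case this yields the decomposition $\sigma_0^\lambda=\sigma_{0,+}^\lambda\cup\sigma_{0,-}^\lambda$ stated in (3).

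The main step is to parametrize each component by $\theta=\arg z\in[0,2\pi)$. For $z=re^{\ii\theta}\in\BH^+$ we have $\arg(z^2)=2\theta$, so the defining equation becomes
\begin{equation*}
r(\theta)\sin\theta=y(\theta),\qquad y(\theta):=(2k+2)\pi-\arg\lambda-2\theta,\qquad x(\theta)=y(\theta)\cot\theta.
\end{equation*}
In $\BH^-$, where $\arg(z^2)=2\theta-2\pi$, the shift is $(2k+4)\pi$ in place of $(2k+2)\pi$. A direct check using $\arg\lambda\in(0,2\pi)$ shows that the numerator of $r(\theta)=y(\theta)/\sin\theta$ has the same sign as $\sin\theta$ throughout the relevant $\theta$-range (positive on $(0,\pi)$ for $k\geq1$, negative on $(\pi,2\pi)$ for $k\leq-1$), so $r(\theta)>0$ and the parametrization defines a real-analytic simple curve.

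To realize the curve as a graph $y=\sigma_k^\lambda(x)$ and establish strict monotonicity, I would compute
\begin{equation*}
\frac{dx}{d\theta}=-\frac{\sin(2\theta)+y(\theta)}{\sin^2\theta}.
\end{equation*}
The crux is the sign of $H(\theta):=\sin(2\theta)+y(\theta)$, which is governed by the auxiliary function $h(\theta):=\sin(2\theta)+C-2\theta$ with $C=(2k+2)\pi$ or $(2k+4)\pi$. Since $h'(\theta)=2\cos(2\theta)-2\leq0$, $h$ is weakly decreasing; explicit boundary evaluation gives $h((0,\pi))\subset(2k\pi,(2k+2)\pi)$ in $\BH^+$ and $h((\pi,2\pi))\subset(2k\pi,(2k+2)\pi)$ in $\BH^-$. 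Comparing with $\arg\lambda\in(0,2\pi)$ then yields $H>0$ on $(0,\pi)$ for $k\geq1$ and $H<0$ on $(\pi,2\pi)$ for $k\leq-1$. Combined with $dy/d\theta=-2$, this shows that $y$ is a strictly monotonic real-analytic function of $x$: increasing for $k\geq1$ and decreasing for $k\leq-1$. The prescribed asymptotic values in (1) and (2) then follow by letting $\theta$ tend to the endpoints of its parameter interval.

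For case (3) the condition $y(\theta)>0$ in $\BH^+$ restricts $\theta\in(0,\pi-\arg\lambda/2)$, and $r(\theta)=y(\theta)/\sin\theta\to0^+$ as $\theta\to(\pi-\arg\lambda/2)^-$, so the curve emanates from the critical point $0$. On the restriction $\re z>0$, i.e., $\theta\in(0,\pi/2)$, both $\sin(2\theta)>0$ and $y(\theta)>0$, so $H>0$ automatically without invoking the general bound, and the same argument yields a strictly increasing graph over $(0,+\infty)$ with limit $2\pi-\arg\lambda$ at $+\infty$; the component $\sigma_{0,-}^\lambda$ is handled symmetrically. The main expected obstacle is the sign analysis of $H(\theta)$ for the borderline indices $k=\pm1$; it is precisely there that the strict inequality $\arg\lambda<2\pi$ (equivalently, $\lambda\notin\R^{+}$) is essential, consistent with the fact that $\sigma_k^\lambda$ can pinch at the critical point $-2$ when $\lambda\in\R^{+}$.
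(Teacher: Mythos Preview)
Your argument is correct and follows essentially the same route as the paper: both parametrize $\sigma_k^\lambda$ by $\theta=\arg z$, observe that for $z\in\BH^+$ the defining equation becomes $\im z=(2k+2)\pi-\arg\lambda-2\theta$ (with the shift $(2k+4)\pi$ in $\BH^-$), and read off the containment $\sigma_k^\lambda\subset S_k^\lambda$ and the limiting heights from the endpoints of the $\theta$-interval. The paper phrases this geometrically as the intersection of the ray $r_\theta=\{z:\arg z=\theta\}$ with the horizontal line at height $(2k+2)\pi-\arg\lambda-2\theta$, which is exactly your $y(\theta)$.

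Where you go further is in justifying that the curve is actually a graph $y=\sigma_k^\lambda(x)$ with the stated monotonicity. The paper only records that $\im z_{k,\theta}^\lambda$ is strictly decreasing in $\theta$, which shows the curve is a graph over the $y$-axis but does not by itself rule out folding in $x$. Your computation of $dx/d\theta=-(\sin 2\theta+y(\theta))/\sin^2\theta$ and the sign analysis via $h(\theta)=\sin 2\theta+C-2\theta$ (with $h'\le 0$ and the boundary values $h(0),h(\pi)$ pinning $h((0,\pi))=(2k\pi,(2k+2)\pi)$) cleanly supplies this missing step and makes transparent why $k=\pm1$ are the borderline cases. Your treatment of part~(c), including the observation that $H>0$ is automatic on $\theta\in(0,\pi/2)$, is likewise a faithful (and slightly more explicit) version of the paper's argument.
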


See Figure \ref{Fig:Dynam-FG}. From Lemma \ref{lem:S_k} we know that the two boundary components of $S_k^\lambda$ are asymptotic lines of $\sigma_k^\lambda$ for all $k\in\Z$.

\begin{figure}[!htpb]
  \setlength{\unitlength}{1mm}
  \includegraphics[width=0.85\textwidth]{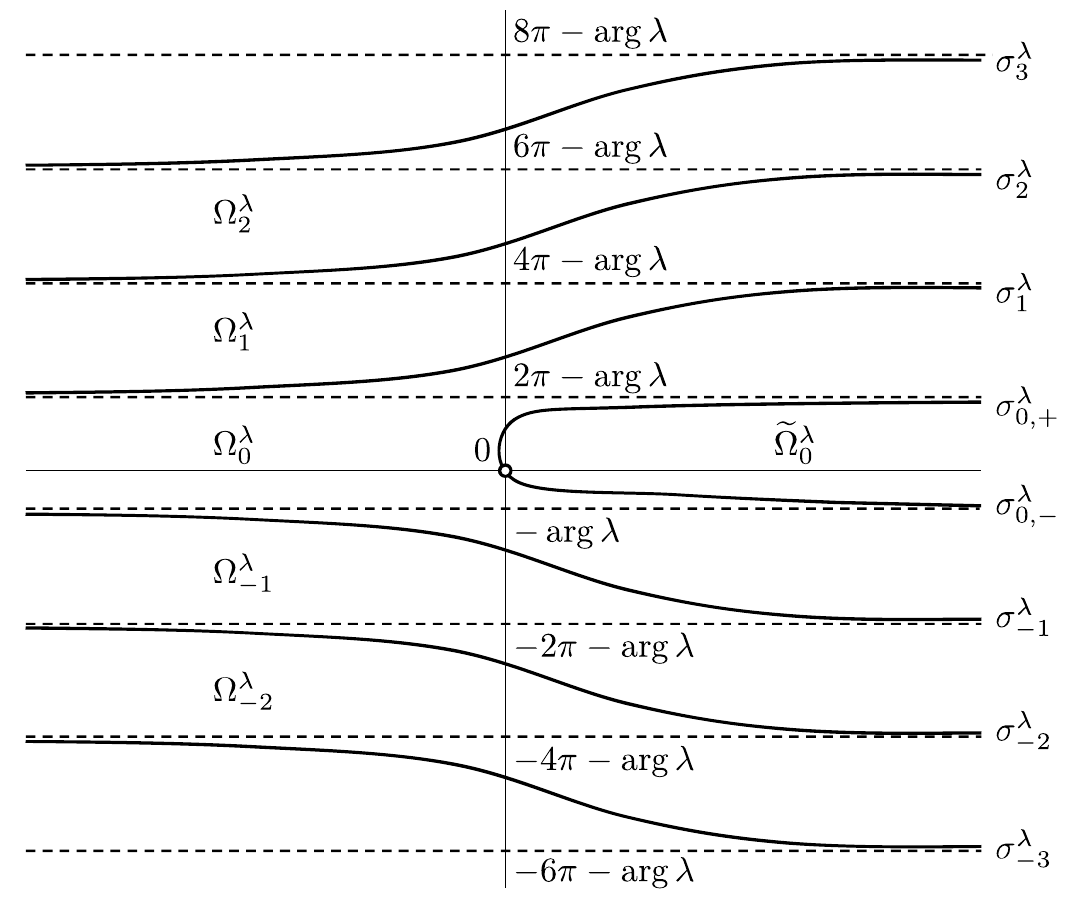}
  \caption{The decomposition of the dynamical plane of $E_\lambda$ by the curves $\{\sigma_k^\lambda:k\in\Z\}$, which are preimages of $\R^+$ and contained in the horizontal strips $\{S_k^\lambda:k\in\Z\}$ respectively. The set $\sigma_0^\lambda$ is special, which consists of two curves $\sigma_{0,+}^\lambda$ and $\sigma_{0,-}^\lambda$ and both of them land at $0$.}
  \label{Fig:Dynam-FG}
\end{figure}

\begin{proof}
(a) If $k\geq 1$, then $(2 k+2)\pi-\arg\lambda-\arg (z^2)>0$ since $\arg\lambda\in(0,2\pi)$ and $\arg(z^2)\in[0,2\pi)$. This implies that $\sigma_k^\lambda\subset\BH^+$. For any $\theta\in(0,\pi)$, the intersection of the radial $r_\theta$ and the line $L_{k,\theta,+}^\lambda\subset\BH^+$ is a singleton $z_{k,\theta}^\lambda\in\sigma_k^\lambda$, where
\begin{equation}
\begin{split}
r_\theta=&~\{z\in\C:\arg z=\theta\} \text{\quad and} \\
L_{k,\theta,+}^\lambda=&~\{z\in\C:\im z=(2k+2)\pi-\arg\lambda-2\theta\}.
\end{split}
\end{equation}
As $\theta$ varies from $0$ to $\pi$, the imaginary part of $z_{k,\theta}^\lambda$ is strictly decreasing. In particular, we have
\begin{equation}
\lim_{\theta\to 0^+}z_{k,\theta}^\lambda=(2k+2)\pi-\arg\lambda \text{\quad and\quad} \lim_{\theta\to \pi^-}z_{k,\theta}^\lambda=2k\pi-\arg\lambda.
\end{equation}
Hence we have $\sigma_k^\lambda\subset S_k^\lambda$ and $\sigma_k^\lambda$ is a real-analytic curve since $\R^+$ does not contain the critical value $E(-2)=4\lambda/e^2$.

\medskip

(b) If $k\leq -1$, then $(2 k+2)\pi-\arg\lambda-\arg (z^2)<0$. This implies that $\sigma_k^\lambda\subset\BH^-$. For any $\theta\in(\pi,2\pi)$, $r_\theta\cap L_{k,\theta,-}^\lambda$ is a singleton $z_{k,\theta}^\lambda\in\sigma_k^\lambda$, where
\begin{equation}
L_{k,\theta,-}^\lambda=\{z\in\C:\im z=(2k+4)\pi-\arg\lambda-2\theta\}.
\end{equation}
 As $\theta$ varies from $\pi$ to $2\pi$, the imaginary part of $z_{k,\theta}^\lambda$ is strictly decreasing. In particular, we have
\begin{equation}
\lim_{\theta\to \pi^+}z_{k,\theta}^\lambda=(2k+2)\pi-\arg\lambda \text{\quad and\quad} \lim_{\theta\to 2\pi^-}z_{k,\theta}^\lambda=2k\pi-\arg\lambda.
\end{equation}
This implies that $\sigma_k^\lambda\subset S_k^\lambda$ and $\sigma_k^\lambda$ is also real-analytic.

\medskip

(c) If $k=0$, then the sign of $2\pi-\arg\lambda-\arg (z^2)$ depends on $\lambda$ and $z$. There are following two cases:
\begin{itemize}
\item If $\theta\in(0,\pi)$, then $r_\theta\cap L_{0,\theta,+}^\lambda\neq\emptyset$ if and only if $\theta\in (0,\pi-\frac{\arg \lambda}{2})$;
\item If $\theta\in(\pi,2\pi)$, then $r_\theta\cap L_{0,\theta,-}^\lambda\neq\emptyset$ if and only if $\theta\in (2\pi-\frac{\arg \lambda}{2},2\pi)$.
\end{itemize}
Moreover, when $r_\theta\cap L_{0,\theta,+}^\lambda$ (or $r_\theta\cap L_{0,\theta,-}^\lambda$) is non-empty, then it is a singleton, which is denote by $z_{0,\theta}^\lambda$.
As $\theta$ varies from $0$ to $\pi-\frac{\arg \lambda}{2}$, or from $2\pi-\frac{\arg \lambda}{2}$ to $2\pi$, the imaginary part of $z_{0,\theta}^\lambda$ is strictly decreasing. We have
\begin{equation}
\lim_{\theta\to 0^+}z_{0,\theta}^\lambda=2\pi-\arg\lambda \text{\quad and\quad} \lim_{\theta\to 2\pi^-}z_{0,\theta}^\lambda=-\arg\lambda.
\end{equation}
Moreover, $z_{0,\theta}^\lambda\to 0$ as $\theta\to\big(\pi-\frac{\arg \lambda}{2}\big)^-$ or $\theta\to\big(2\pi-\frac{\arg \lambda}{2}\big)^+$.
This implies that $\sigma_0^\lambda=\sigma_{0,+}^\lambda\cup\sigma_{0,-}^\lambda\subset S_0^\lambda$ and $\sigma_{0,+}^\lambda$, $\sigma_{0,-}^\lambda$ are real-analytic curves.
\end{proof}

Note that $E_\lambda:\sigma_k^\lambda \to \R^+$ with $k\neq 0$ and $E_\lambda:\sigma_{0,\pm}^\lambda \to \R^+$  are homeomorphisms.
The set $\sigma_0^\lambda\cup\{0\}=\sigma_{0,+}^\lambda\cup\{0\}\cup \sigma_{0,-}^\lambda$ is an analytic curve of shape $C$ (see Figure \ref{Fig:Dynam-FG}). For $k\geq 1$, we use $\Omega_k^\lambda$ to denote the simply connected domain bounded by $\sigma_k^\lambda$ and $\sigma_{k+1}^\lambda$. For $k\leq -1$, we use $\Omega_k^\lambda$ to denote the simply connected domain bounded by $\sigma_k^\lambda$ and $\sigma_{k-1}^\lambda$. For $k=0$, we use $\Omega_0^\lambda$ to denote the simply connected domain bounded by $\sigma_1^\lambda$, $\sigma_{-1}^\lambda$ and $\sigma_0^\lambda\cup\{0\}$. Let $\widetilde{\Omega}_0^\lambda$ be the component of $\C\setminus(\sigma_0^\lambda\cup\{0\})$ containing $\R^+$.

\medskip
Besides the critical point $0$, the remaining critical point $-2$ is contained in $\Omega_0^\lambda$. The following result follows from Lemma \ref{lem:S_k} and Riemann-Hurwitz's formula:

\begin{cor}\label{cor:degree}
We have
\begin{itemize}
\item $E_\lambda:\Omega_k^\lambda\to\C\setminus [0,+\infty)$ is conformal, for all $k\neq 0$;
\item $E_\lambda:\widetilde{\Omega}_0^\lambda\to\C\setminus [0,+\infty)$ is conformal; and
\item $E_\lambda:\Omega_0^\lambda\to\C\setminus [0,+\infty)$ is proper of degree $2$.
\end{itemize}
\end{cor}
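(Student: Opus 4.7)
The strategy is to apply Riemann-Hurwitz to the restriction of $E_\lambda$ to each simply connected region, after verifying properness and locating the critical points. Since $E_\lambda^{-1}(0)=\{0\}$ (because $e^z$ never vanishes), Lemma \ref{lem:S_k} gives $E_\lambda^{-1}([0,+\infty))=\{0\}\cup\bigcup_{k\in\Z}\sigma_k^\lambda$, so $E_\lambda$ maps each of $\Omega_k^\lambda$ ($k\in\Z$) and $\widetilde{\Omega}_0^\lambda$ into $\C\setminus[0,+\infty)$, which is itself a topological disk.

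To show $E_\lambda|_R:R\to\C\setminus[0,+\infty)$ is proper, I would use that each such region $R$ is contained in a horizontal strip of bounded height: $\Omega_k^\lambda$ with $k\neq 0$ sits between the asymptotes of $\sigma_k^\lambda$ and $\sigma_{k\pm 1}^\lambda$, while both $\Omega_0^\lambda$ and $\widetilde{\Omega}_0^\lambda$ lie between the asymptotes of $\sigma_{-1}^\lambda$ and $\sigma_1^\lambda$. If a sequence $z_n\in R$ leaves every compact subset of $R$, then either it accumulates on $\partial R\subset E_\lambda^{-1}([0,+\infty))$ and continuity forces $E_\lambda(z_n)\to[0,+\infty)$, or $z_n\to\infty$, in which case $|\im z_n|$ is bounded and $|\re z_n|\to\infty$, so
\[
|E_\lambda(z_n)|=|\lambda|\,|z_n|^2\,e^{\re z_n}\to 0 \text{ or } +\infty.
\]
Either way $E_\lambda(z_n)$ leaves every compact subset of $\C\setminus[0,+\infty)$, establishing properness; the image is then open and closed in $\C\setminus[0,+\infty)$ and so equals it.

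For the degree I would apply Riemann-Hurwitz: for a proper holomorphic map $f:U\to V$ between simply connected plane domains of degree $d$, one has $1=d-\sum_{c\in\Crit(f)\cap U}(e_c-1)$. Since $E_\lambda'(z)=\lambda z(z+2)e^z$, the only critical points of $E_\lambda$ in $\C$ are $0$ and $-2$. The point $0$ lies on $\sigma_0^\lambda\cup\{0\}$ and so is in no region. Substituting $z=-2$ into the defining equation of $\sigma_k^\lambda$ gives $0=(2k+2)\pi-\arg\lambda$, which is impossible for $\arg\lambda\in(0,2\pi)$, so $-2\notin\sigma_k^\lambda$ for any $k$; since $\im(-2)=0$ falls inside the defining interval of $S_0^\lambda$, and a local check at $0$ (using the tangent directions $\pi-\arg\lambda/2$ and $2\pi-\arg\lambda/2$ of $\sigma_{0,\pm}^\lambda$ extracted from Lemma \ref{lem:S_k}(c)) shows that $\R^+$ and $\R^-$ lie on opposite sides of the C-arc near $0$, the point $-2\in\R^-$ lies in $\Omega_0^\lambda$ rather than in $\widetilde{\Omega}_0^\lambda$. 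Consequently $d=1$ for every $R\in\{\Omega_k^\lambda:k\neq 0\}\cup\{\widetilde{\Omega}_0^\lambda\}$, giving conformality, while the simple critical point $-2\in\Omega_0^\lambda$ has local degree $2$ and contributes $1$ to the ramification sum, forcing $d=2$.

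The main technical obstacle is the placement of $-2$ among the several regions bordering the central strip $S_0^\lambda$, which requires a careful local analysis of the C-arc $\sigma_0^\lambda\cup\{0\}$ near its cusp at $0$. Once this placement and properness are in hand, the Riemann-Hurwitz step is immediate.
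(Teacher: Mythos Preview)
Your proposal is correct and follows essentially the same approach as the paper, which simply states that the corollary ``follows from Lemma~\ref{lem:S_k} and Riemann-Hurwitz's formula'' after asserting (without proof) that $-2\in\Omega_0^\lambda$. You have supplied the details the paper omits---the properness argument via the horizontal strip bound and the local tangent analysis at $0$ pinning down the location of $-2$---so your write-up is in fact more complete than the paper's own treatment.
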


\begin{proof}[Proof of Theorem \ref{thm:Mandel-FG}]
For $\lambda\in\Lambda=\big\{\lambda\in\C:\,\tfrac{2}{5}< |\lambda|<51\big\}\setminus\R^+$, we define
\begin{equation}
V_\lambda:=\{z\in\C: \tfrac{1}{16|\lambda|}<|z|<30\}\setminus\R^+.
\end{equation}
Let $U_\lambda:=E_\lambda^{-1}(V_\lambda)\cap\Omega_0^\lambda$. See Figure \ref{Fig:Poly-like-FG}. We claim that $\{(E_\lambda;U_\lambda,V_\lambda)\}_{\lambda\in\Lambda}$ is a holomorphic family of quadratic-like mappings with critical point $-2\in U_\lambda$. Since $\Omega_0^\lambda\cap[0,+\infty)=\emptyset$, by Corollary \ref{cor:degree}, it is sufficient to prove that for all $\lambda\in\Lambda$,
\begin{itemize}
\item The critical value $E_\lambda(-2)=4\lambda/e^2\in V_\lambda$; and
\item $U_\lambda$ is contained in $\{z\in\C: \tfrac{1}{6|\lambda|}<|z|<25\}$.
\end{itemize}
Obviously we have $E_\lambda(-2)\not\in\R^+$ if $\lambda\not\in\R^+$. If $\frac{2}{5}<|\lambda|<51$, then
\begin{equation}
|E_\lambda(-2)|=\frac{4|\lambda|}{e^2}<\frac{204}{2.7^2}<30,
\end{equation}
and $|E_\lambda(-2)|>\frac{1}{16|\lambda|}$ since $|\lambda|>\frac{2}{5}>\frac{e}{8}$. This implies that $E_\lambda(-2)\in V_\lambda$ for all $\lambda\in\Lambda$.

\begin{figure}[!htpb]
  \setlength{\unitlength}{1mm}
  \includegraphics[width=0.85\textwidth]{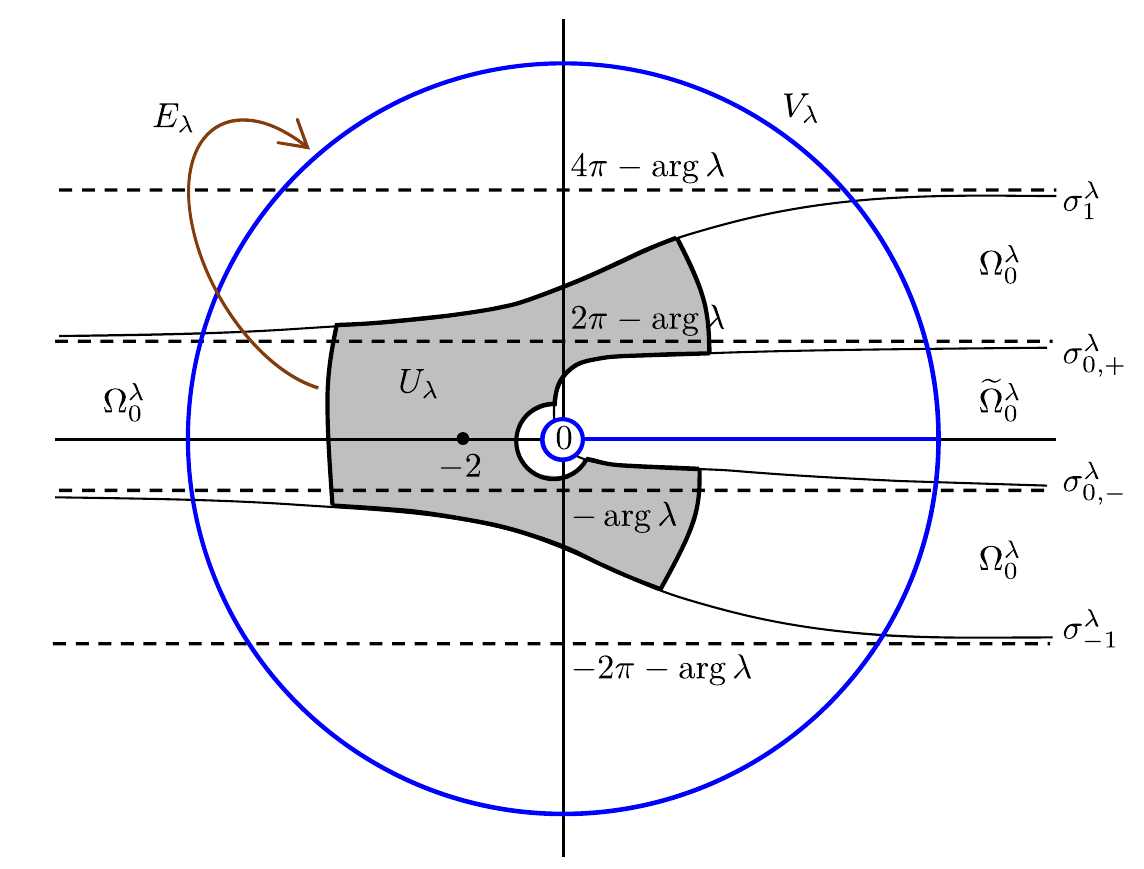}
  \caption{The sketch of the construction of quadratic-like mappings $(E_\lambda;U_\lambda,V_\lambda)$.}
  \label{Fig:Poly-like-FG}
\end{figure}

By Lemma \ref{lem:S_k}, it follows that $U_\lambda\subset S_{-1}^\lambda\cup S_0^\lambda\cup S_1^\lambda$ and hence $|\im z|<4\pi$ for any $z\in U_\lambda$. We claim that $U_\lambda\subset\{z\in\C:-20<\re z<5\}$. Indeed, if $\re z\geq 5$, then
\begin{equation}
|E_\lambda(z)|=|\lambda|\cdot|z|^2\cdot e^{\re z}\geq \tfrac{2}{5}\cdot 5^2 \cdot e^5>30.
\end{equation}
Note that
\begin{equation}
20^2+(4\pi)^2<20^2+13^2=569<625=25^2<30^2,
\end{equation}
and $x\mapsto x^2 e^x$ is increasing on $(-\infty,-2]$. If $\re z\leq -20$ and $|\im z|<4\pi$, then
\begin{equation}
|E_\lambda(z)|\leq 51\cdot \big(20^2+(4\pi)^2\big) \,e^{-20}<50\cdot 5^4\cdot \left(\frac{2}{5}\right)^{20}<\frac{1}{2^6\cdot 5^2}<\frac{1}{16|\lambda|}.
\end{equation}
Here we use the fact that $51\cdot 569<50\cdot 625$ and $2^{27}<5^{12}$. Hence $U_\lambda\subset\{z\in\C: |z|<25\}$. If $|z|\leq \frac{1}{6|\lambda|}$, then
\begin{equation}
|E_\lambda(z)|\leq |\lambda|\cdot\left(\frac{1}{6|\lambda|}\right)^2 e^{\frac{1}{6|\lambda|}}\leq\frac{1}{36|\lambda|} e^{5/12}<\frac{1}{18|\lambda|}<\frac{1}{16|\lambda|}.
\end{equation}
Hence $U_\lambda\subset\{z\in\C: \tfrac{1}{6|\lambda|}<|z|<25\}$. This ends the proof that $\{(E_\lambda;U_\lambda,V_\lambda)\}_{\lambda\in\Lambda}$ is a holomorphic family of quadratic-like mappings with $-2\in U_\lambda$.

\medskip
The boundary of $\mathcal{D}=\big\{\lambda\in\C:\,\tfrac{1}{2}\leq |\lambda|\leq 50 \text{ and } \tfrac{1}{10}\leq \arg\lambda\leq 2\pi-\tfrac{1}{10}\big\}$ is the union of following $4$ curves:
\begin{equation}
\begin{split}
\gamma_1=&~\{\lambda\in\C:|\lambda|=\tfrac{1}{2}, ~\arg\lambda\in[\tfrac{1}{10},2\pi-\tfrac{1}{10}]\}, \\
\gamma_2=&~\{\lambda\in\C:|\lambda|=50, ~\arg\lambda\in[\tfrac{1}{10},2\pi-\tfrac{1}{10}]\}, \\
\gamma_3=&~\{\lambda\in\C:|\lambda|\in[\tfrac{1}{2},50], ~\arg\lambda=\tfrac{1}{10}\}, \text{ and}\\
\gamma_4=&~\{\lambda\in\C:|\lambda|\in[\tfrac{1}{2},50], ~\arg\lambda=2\pi-\tfrac{1}{10}\}.
\end{split}
\end{equation}
Obviously, $E_{\lambda}(-2)+2=4\lambda/e^2+2$ turns around $0$ once as $\lambda$ turns around $\partial \mathcal{D}$ once. In the following we prove that
$4\lambda/e^2\in V_\lambda\setminus U_\lambda$ for every $\lambda\in\partial\mathcal{D}$. Since $4\lambda/e^2\in V_\lambda$ has been proved for all $\lambda\in\Lambda$, it suffices to show that $4\lambda/e^2\not\in U_\lambda$ for all $\lambda\in\partial\mathcal{D}$.

\medskip
For simplicity, we denote by $v_\lambda:=4\lambda/e^2$. Recall that $U_\lambda$ is contained in $\{z\in\C: \tfrac{1}{6|\lambda|}<|z|<25\}$. If $\lambda\in\gamma_1$, then
\begin{equation}
|v_\lambda|=\frac{2}{e^2}<\frac{2}{(\tfrac{5}{2})^2}<\frac{1}{3}=\frac{1}{6|\lambda|}.
\end{equation}
This implies that $v_\lambda\not\in U_\lambda$ for $\lambda\in\gamma_1$.

Note that $e^2<2.8^2<8$. If $\lambda\in\gamma_2$, then $|v_\lambda|=200/e^2>25$.
This implies that $v_\lambda\not\in U_\lambda$ for $\lambda\in\gamma_2$.

If $\lambda\in\gamma_3$, we claim that $v_\lambda\in\widetilde{\Omega}_0^\lambda$. Denote by $\theta=\frac{1}{10}$. According to the proof of Lemma \ref{lem:S_k}(c), it suffices to prove that $\im z_{0,\theta}^\lambda>\im v_\lambda$. This is true since
\begin{equation}
\begin{split}
&~(2\pi-\arg\lambda-2\arg z_{0,\theta}^\lambda)-\frac{4}{e^2}|\lambda|\sin\theta>2\pi-3\theta-\frac{4}{e^2}|\lambda|\theta \\
\geq &~2\pi-\Big(3+\frac{200}{e^2}\Big)\cdot\frac{1}{10}>2\pi-\frac{53}{10}>0.
\end{split}
\end{equation}
Since $\Omega_0^\lambda\cap \widetilde{\Omega}_0^\lambda=\emptyset$, we conclude that $v_\lambda\not\in U_\lambda$ for $\lambda\in\gamma_3$.
By the symmetry of the parameters, we have $v_\lambda\not\in U_\lambda$ for $\lambda\in\gamma_4$. The proof is complete.
\end{proof}

One may extend the result of Theorem \ref{thm:Mandel-FG} to $E_{\lambda,m}(z)=\lambda z^m e^z$ for all $m\geq 2$. In particular, if $m$ is even, we consider the preimages of positive real axis, while if $m\geq 3$ is odd, we consider the preimages of negative real axis. One can construct a holomorphic family of quadratic-like mappings $(E_{\lambda,m};U_{\lambda,m},V_{\lambda,m})$ and prove that there exists a copy of the Mandelbrot set in the parameter space of $E_{\lambda,m}$, which has been observed by computer experiments \cite{FG07}.

\section*{Acknowledgements}
The author would like to thank Arnaud Ch\'{e}ritat for providing an algorithm to draw Figure \ref{Fig:Herman-mero} and Lasse Rempe for very helpful comments.
He is also very grateful to the referee for very insightful and detailed comments, suggestions and corrections.
This work was supported by NSFC (grant No.\,12071210) and NSF of Jiangsu Province (grant No.\,BK20191246).


\bibliographystyle{amsalpha}
\bibliography{E:/Latex-model/Ref1}

\end{document}